\newcommand{\R}{\mathbb{R}}
\theoremstyle{plain}
\newtheorem*{acknowledgment}{Acknowledgment}
\newtheorem{algorithm}{Algorithm}[section]
\newtheorem{corollary}[algorithm]{Corollary}
\newtheorem{definition}[algorithm]{Definition}
\newtheorem{circlebundlelemma}[algorithm]{Circle Bundle Lemma}
\newtheorem{diskbundlelemma}[algorithm]{Disk Bundle Lemma}
\newtheorem{lemma}[algorithm]{Lemma}
\newtheorem*{maintheorem}{Main Theorem}
\newtheorem{theorem} [algorithm] {Theorem}
\newtheorem{proposition}[algorithm]{Proposition}
\newtheorem{remark}[algorithm]{Remark}
\numberwithin{equation}{algorithm}
\newcommand\pgfmathsinandcos[3]{%
  \pgfmathsetmacro#1{sin(#3)}%
  \pgfmathsetmacro#2{cos(#3)}%
}
\newcommand\LongitudePlane[3][current plane]{%
  \pgfmathsinandcos\sinEl\cosEl{#2} 
  \pgfmathsinandcos\sint\cost{#3} 
  \tikzset{#1/.estyle={cm={\cost,\sint*\sinEl,0,\cosEl,(0,0)}}}
}
\newcommand\LatitudePlane[3][current plane]{%
  \pgfmathsinandcos\sinEl\cosEl{#2} 
  \pgfmathsinandcos\sint\cost{#3} 
  \pgfmathsetmacro\yshift{\cosEl*\sint}
  \tikzset{#1/.estyle={cm={\cost,0,0,\cost*\sinEl,(0,\yshift)}}} %
}
\newcommand\LatitudePlaneT[3][current plane]{%
  \pgfmathsinandcos\sinEl\cosEl{#2} 
  \pgfmathsinandcos\sint\cost{#3} 
  \pgfmathsetmacro\yshift{\cosEl*\sint}
  \tikzset{#1/.estyle={cm={\cost,0,0,\cost*\sinEl,(0,\yshift+1.25)}}} %
}
\newcommand\LongitudePlaneT[3][current plane]{%
  \pgfmathsinandcos\sinEl\cosEl{#2} 
  \pgfmathsinandcos\sint\cost{#3} 
  \tikzset{#1/.estyle={cm={\cost,\sint*\sinEl,0,\cosEl,(0,1.25)}}}
}
\newcommand\DrawLongitudeCircleH[3][1]{
  \LongitudePlane{\angEl}{#2}
  \tikzset{current plane/.prefix style={scale=#1}}
  \pgfmathsetmacro\angVis{atan(sin(#2)*cos(\angEl)/sin(\angEl))} %
  \draw[current plane,very thick] (\angVis:1) arc(\angVis:180-#3-18:1) node[yshift=-0.45em,xshift=-0.06cm]{$H$};
\draw[current plane,very thick](180-#3-10:1) arc (180-#3-10:180-#3:1);
  \draw[current plane,dashed,very thick] (#3:1) arc (#3:\angVis:1);
  \draw[current plane](180-#3+8:1)node{$R$};
  \LatitudePlane{\angEl}{#3-8}
  \tikzset{current plane/.prefix style={scale=#1}}
  \pgfmathsetmacro\sinVis{sin(#2)/cos(#2)*sin(\angEl)/cos(\angEl)}
  \pgfmathsetmacro\angVis{asin(min(1,max(\sinVis,-1)))}
    \draw[<-,current plane] (210:1) arc (210:245:1);
    \draw[->,current plane] (255:1) arc (255:290:1);
}
\newcommand\DrawLongitudeCircleT[3][1]
\LongitudePlane{\angEl}{#2}
\tikzset{current plane/.prefix style={scale=#1}}
\pgfmathsetmacro\angVis{atan(sin(#2)*cos(\angEl)/sin(\angEl))}
\LatitudePlane{\angEl}{#3}
\tikzset{current plane/.prefix style={scale=#1}}
\pgfmathsetmacro\sinVis{sin(#2)/cos(#2)*sin(\angEl)/cos(\angEl)}
\pgfmathsetmacro\angVis{asin(min(1,max(\sinVis,-1)))}
\LongitudePlaneT{\angEl}{#2}
\tikzset{current plane/.prefix style={scale=#1}}
\pgfmathsetmacro\angVis{atan(sin(#2)*cos(\angEl)/sin(\angEl))} %
\LatitudePlaneT{\angEl}{-#3}
\tikzset{current plane/.prefix style={scale=#1}}
\pgfmathsetmacro\sinVis{sin(#2)/cos(#2)*sin(\angEl)/cos(\angEl)}
\pgfmathsetmacro\angVis{asin(min(1,max(\sinVis,-1)))}
\newcommand\DrawLongitudeCircleportion[2][1]{
  \LongitudePlane{\angEl}{#2}
  \tikzset{current plane/.prefix style={scale=#1}}
  \pgfmathsetmacro\angVis{atan(sin(#2)*cos(\angEl)/sin(\angEl))} %
\draw[current plane, pattern = dots] (3/4,.1)node[fill = white,above]{\small $D^2$}(0:1) arc (0:60:1) --(1/2,0) --(3/4,0)-- (1,0) ;
\draw[current plane, very thick](60:1) -- (1/2,0);
}
\newcommand\Drawcircleoverdisk[3][1]{
  \LongitudePlane{\angEl}{#2}
  \tikzset{current plane/.prefix style={scale=#1}}
  \pgfmathsetmacro\angVis{atan(sin(#2)*cos(\angEl)/sin(\angEl))} %
\draw[current plane,very thick](90:1)node[dot]{}node[below]{\small$p_n$} ({acos(#3)}:1)--(#3,{sin(acos(#3))/2})node[anchor = west]{\small$S^1$} --(#3,0) ;
}
\newcommand\Drawlightcircleoverdisk[3][1]{
  \LongitudePlane{\angEl}{#2}
  \tikzset{current plane/.prefix style={scale=#1}}
  \pgfmathsetmacro\angVis{atan(sin(#2)*cos(\angEl)/sin(\angEl))} %
\draw[current plane,dotted] (60:1) --(1/2,0) ;
}
\newcommand\DrawLatitudeCirclefront[2][1]{
  \LatitudePlane{\angEl}{#2}
  \tikzset{current plane/.prefix style={scale=#1}}
  \pgfmathsetmacro\sinVis{sin(#2)/cos(#2)*sin(\angEl)/cos(\angEl)}
  \pgfmathsetmacro\angVis{asin(min(1,max(\sinVis,-1)))}
  \draw[current plane] (170:1) arc (170:360:1);
  \draw[current plane,dashed] (0:1) arc (0:170:1);
}
\newcommand\DrawLatitudeCircle[2][1]{
  \LatitudePlane{\angEl}{#2}
  \tikzset{current plane/.prefix style={scale=#1}}
  \pgfmathsetmacro\sinVis{sin(#2)/cos(#2)*sin(\angEl)/cos(\angEl)}
  \pgfmathsetmacro\angVis{asin(min(1,max(\sinVis,-1)))}
  \draw[current plane] (\angVis:1) arc (\angVis:-\angVis-180:1);
  \draw[current plane,dashed] (180-\angVis:1) arc (180-\angVis:\angVis:1);
}
\newcommand\Drawlabels[2][1]{
  \LatitudePlane{\angEl}{#2}
  \tikzset{current plane/.prefix style={scale=#1}}
  \pgfmathsetmacro\sinVis{sin(#2)/cos(#2)*sin(\angEl)/cos(\angEl)}
  \pgfmathsetmacro\angVis{asin(min(1,max(\sinVis,-1)))}
  \draw[current plane](0,0)node[dot]{}(0,0)node[right,fill = white]{\small$p_0$}(180:1)node[dot]{}node[left]{$p_1$} (270:1)node[dot]{} node[below]{\small$p_2$} (360:1);
  \draw[current plane,dashed] (0:1)node[dot]{}node[right]{\small$A(p_1)$} (90:1)node[dot]{} node[above]{\small$A(p_2)$} (180:1);
}
\newcommand\DrawLatitudeCirclefilledD[2][1]{
  \LatitudePlane{\angEl}{#2}
  \tikzset{current plane/.prefix style={scale=#1}}
  \pgfmathsetmacro\sinVis{sin(#2)/cos(#2)*sin(\angEl)/cos(\angEl)}
  \pgfmathsetmacro\angVis{asin(min(1,max(\sinVis,-1)))}
\draw[current plane, very thick, pattern = dots] (0:1) arc (0:360:1);
  \draw[current plane] (-.4,0)node[fill = white]{\small$D^{n-1}$}(0:1) arc (0:180:1);
}
\tikzset{%
  >=latex, 
  inner sep=0pt,%
  outer sep=2pt,%
  mark coordinate/.style={inner sep=0pt,outer sep=0pt,minimum size=3pt,
    fill=black,circle}%
}
\begin{document}

\title{The Diffeomorphism Type of Manifolds with Almost Maximal Volume}
\author{Curtis Pro}
\address{{Department of Mathematics, University of California, Riverside} }
\email{cpro@math.toronto.edu}
\author{Michael Sill}
\address{{Department of Mathematics, University of California, Riverside} }
\email{msill@calbaptist.edu}
\author{Frederick Wilhelm}
\address{{Department of Mathematics, University of California, Riverside}}
\email{fred@math.ucr.edu}
\subjclass{53C20}
\keywords{Diffeomorphism Stability, Alexandrov Geometry}

\begin{abstract}
The smallest $r$ so that a metric $r$--ball covers a metric space $M$ is
called the radius of $M.$ The volume of a metric $r$-ball in the space form
of constant curvature $k$ is an upper bound for the volume of any Riemannian
manifold with sectional curvature $\geq k$ and radius $\leq r$. We show that
when such a manifold has volume almost equal to this upper bound, it is
diffeomorphic to a sphere or a real projective space.\bigskip
\end{abstract}

\maketitle

\section{Introduction}

Any closed Riemannian $n$--manifold $M$ has a lower bound, $k\in \mathbb{R},$
for its sectional curvature$.$ This gives an upper bound for the volume of
any metric ball $B\left( x,r\right) \subset M,$ 
\begin{equation*}
\mathrm{vol\,}B\left( x,r\right) \leq \mathrm{vol}\text{ }\mathcal{D}%
_{k}^{n}\left( r\right) ,
\end{equation*}%
where $\mathcal{D}_{k}^{n}\left( r\right) $ is an $r$--ball in the $n$%
--dimensional, simply connected space form of constant curvature $k.$ If $%
\mathrm{rad}\text{ }M$ is the smallest number $r$ such that a metric $r$%
--ball covers $M,$ it follows that 
\begin{equation}
\mathrm{vol\,}M\leq \mathrm{vol}\text{ }\mathcal{D}_{k}^{n}\left( \mathrm{rad%
}\text{ }M\right) .  \label{max vol inequal}
\end{equation}

The invariant $\mathrm{rad}\text{ }M$ is known as the \emph{radius }of $M$
and can alternatively be defined as 
\begin{equation*}
\mathrm{rad}M=\min_{p\in M}\max_{x\in M}\mathrm{dist}\left( p,x\right) .
\end{equation*}

In the event that $\mathrm{vol\,}M$ is almost equal to $\mathrm{vol}\text{ }%
\mathcal{D}_{k}^{n}\left( \mathrm{rad}\text{ }M\right) ,$ we determine the
diffeomorphism type of $M$.

\begin{maintheorem}
\label{Diff Stab}Given $n\in \mathbb{N},k\in \mathbb{R},$ and $r>0,$ there
is an $\varepsilon >0$ so that every closed Riemannian $n$--manifold $M$
with 
\begin{eqnarray}
\mathrm{sec}\text{ }M &\geq &k,  \notag  \label{maintheoremconditions} \\
\mathrm{rad}\text{ }M &\leq &r,\text{ and } \\
\mathrm{vol}\text{ }M &\geq &\mathrm{vol}\text{ }\mathcal{D}_{k}^{n}\left(
r\right) -\varepsilon  \notag
\end{eqnarray}%
is diffeomorphic to $S^{n}$ or $\mathbb{R}P^{n}.$
\end{maintheorem}

This generalizes Part 1 of Theorem A in \cite{GrovPet3}, where Grove and
Petersen classified these manifolds up to homeomorphism. They also showed
that for any $\varepsilon >0$ and $M=S^{n}$ or $\mathbb{R}P^{n},$ there are
Riemannian metrics that satisfy (\ref{maintheoremconditions}), except when $%
k>0$ and $r\in \left( \frac{1}{2}\frac{\pi }{\sqrt{k}},\frac{\pi }{\sqrt{k}}%
\right) .$ Thus Inequality (\ref{max vol inequal}) is optimal, except when $%
k>0$ and $r\in \left( \frac{1}{2}\frac{\pi }{\sqrt{k}},\frac{\pi }{\sqrt{k}}%
\right) .$

For $k>0$ and $r\in \left( \frac{1}{2}\frac{\pi }{\sqrt{k}},\frac{\pi }{%
\sqrt{k}}\right) ,$ Grove and Petersen also computed the optimal upper
volume bound for the class of manifolds $M$ with 
\begin{equation}
\mathrm{sec}\text{ }M\geq k~~~\text{ and }~~~\mathrm{rad}\text{ }M\leq r.
\label{lowercurvupperrad}
\end{equation}%
It is strictly less than $\mathrm{vol}\text{ }\mathcal{D}_{k}^{n}\left(
r\right) ,$ \cite{GrovPet3}. For $k>0$ and $r\in \left( \frac{1}{2}\frac{\pi 
}{\sqrt{k}},\frac{\pi }{\sqrt{k}}\right) ,$ manifolds satisfying (\ref%
{lowercurvupperrad}) with almost maximal volume are already known to be
diffeomorphic to spheres \cite{GrovWilh1}. The main theorem in \cite{OSY}
gives the same result when $r=\frac{\pi }{\sqrt{k}}.$

For $k>0$ and $r={\frac{\pi }{\sqrt{k}}},$ the maximal volume $\mathrm{vol}%
\text{ }\mathcal{D}_{1}^{n}\left( {\frac{\pi }{\sqrt{k}}}\right) $ is
realized by the $n$-sphere with constant curvature $k$. For $k>0$ and $r=%
\frac{\pi }{2\sqrt{k}},$ the maximal volume $\mathrm{vol\,}\mathcal{D}%
_{1}^{n}\left( \frac{\pi }{2\sqrt{k}}\right) $ is realized by $\mathbb{R}%
P^{n}$ with constant curvature $k.$ Apart from these cases, there are no
Riemannian manifolds $M$ satisfying (\ref{lowercurvupperrad}) and $\mathrm{%
vol}\text{ }M=\mathrm{vol}\text{ }\mathcal{D}_{k}^{n}\left( r\right) .$
Rather, the maximal volume is realized by one of the following two types of
Alexandrov spaces \cite{GrovPet3}.

\begin{definition}
\textbf{(Purse) }Let $R:\mathcal{D}_{k}^{n}\left( r\right) \rightarrow 
\mathcal{D}_{k}^{n}\left( r\right) $ be reflection in a totally geodesic
hyperplane $H$ through the center of $\mathcal{D}_{k}^{n}\left( r\right) $.
The Purse, $P_{k,r}^{n},$ is the quotient space%
\begin{equation*}
\mathcal{D}_{k}^{n}\left( r\right) /\left\{ v\sim R\left( v\right) \right\} ,%
\text{ provided }v\in \partial \mathcal{D}_{k}^{n}\left( r\right) .
\end{equation*}

Alternatively we let $\left\{ \frac{1}{2}\mathcal{D}_{k}^{n}\left( r\right)
\right\} ^{+}\cup \left\{ \frac{1}{2}\mathcal{D}_{k}^{n}\left( r\right)
\right\} ^{-}=D_{k}^{n}(r)$ be the decomposition of $\mathcal{D}%
_{k}^{n}\left( r\right) $ into the two half disks on either side of $H.$
Then $P_{k,r}^{n}$ is isometric to the double of $\left\{ \frac{1}{2}%
\mathcal{D}_{k}^{n}\left( r\right) \right\} ^{+}.$ In particular, $%
P_{k,r}^{n}$ is homeomorphic to $S^{n}.$
\end{definition}

\begin{definition}
\label{crosscapexample} \textbf{(Crosscap) }The constant curvature $k$
Crosscap, $C_{k,r}^{n},$ is the quotient of $\mathcal{D}_{k}^{n}\left(
r\right) $ obtained by identifying antipodal points on the boundary. Thus $%
C_{k,r}^{n}$ is homeomorphic to $\mathbb{R}P^{n}$. There is a canonical
metric on $C_{k,r}^{n}$ that makes this quotient map a submetry. The
universal cover of $C_{k,r}^{n}$ is the double of $\mathcal{D}_{k}^{n}\left(
r\right) $. If we write this double as $\boldsymbol{DD}_{k}^{n}\left(
r\right) \equiv \mathcal{D}_{k}^{n}\left( r\right) ^{+}\cup _{\partial 
\mathcal{D}_{k}^{n}(r)^{\pm }}\mathcal{D}_{k}^{n}\left( r\right) ^{-},$ then
the free involution%
\begin{equation*}
A:\boldsymbol{DD}_{k}^{n}\left( r\right) \left( r\right) \longrightarrow 
\boldsymbol{DD}_{k}^{n}\left( r\right) \left( r\right)
\end{equation*}%
that gives the covering map $\boldsymbol{DD}_{k}^{n}\left( r\right)
\longrightarrow C_{k,r}^{n}$ is%
\begin{equation*}
A:\left( x,+\right) \longmapsto \left( -x,-\right) ,
\end{equation*}%
where the sign in the second entry indicates whether the point is in $%
\mathcal{D}_{k}^{n}(r)^{+}$ or $\mathcal{D}_{k}^{n}(r)^{-}.$
\end{definition}

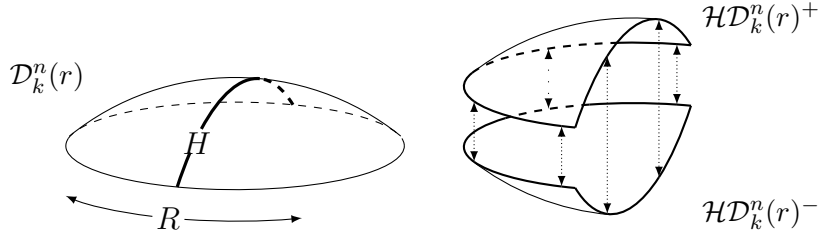
\begin{figure}[ht]
\vspace{-20pt}
\centering
\begin{tabular}{lr}
\begin{tikzpicture}
\def\R{3.5} 
\def\r{50}
\def\angEl{15} 
\draw (-2.5,3.5) node {\small$\mathcal{D}^n_{k}(r)$};
\draw (\r+1:\R) arc (\r+1:180-\r-1:\R);
\DrawLatitudeCirclefront[\R]{\r} 
\DrawLongitudeCircleH[\R]{70}{\r}
\end{tikzpicture} $~~~~~$ & \begin{tikzpicture}
\def\R{3.5} 
\def\r{50}
\def\angEl{15} 
\draw (1.7,3.5) node {\small$\mathcal{HD}^n_{k}(r)^+$};
\draw (1.7,.9) node {\small$\mathcal{HD}^n_{k}(r)^-$};
\draw(85:\R) arc (85:180-\r-2:\R);
\draw (0,4.37)+(265:\R) arc (265:180+\r+2:\R);
\DrawLongitudeCircleT[\R]{70}{\r}
\end{tikzpicture}
\end{tabular}
\caption{Two equivalent constructions of $P^{2}_{1,r}$}
\end{figure}

Let $\left\{ M_{i}\right\} _{i=1}^{\infty }$ be a sequence of closed $n$%
-manifolds with $\mathrm{sec}$ $M\geq k$, $\mathrm{rad\,}M\leq r,$ and $\{%
\mathrm{vol}\text{{}}M_{i}\}$ converging to $\mathrm{vol}\text{ }\mathcal{D}%
_{k}^{n}\left( r\right) ,$ where $r\leq {\frac{\pi }{2\sqrt{k}}}$ if $k>0$.
Grove and Petersen showed that $\{M_{i}\}$ has a subsequence that converges
to either the crosscap, $C_{k,r}^{n},$ or the purse, $P_{k,r}^{n},$ in the
Gromov-Hausdorff topology \cite{GrovPet3}. Our main theorem follows by
combining this with the following \textit{diffeomorphism stability theorems}.

\begin{theorem}
\label{Purse Stab}Let $\left\{ M_{\alpha }\right\} _{\alpha =1}^{\infty }$
be a sequence of closed Riemannian $n$--manifolds with $\mathrm{sec}$ $%
M_{\alpha }\geq k$ so that 
\begin{equation*}
M_{\alpha }\longrightarrow P_{k,r}^{n}
\end{equation*}%
in the Gromov-Hausdorff topology. Then all but finitely many of the $%
M_{\alpha }$s are diffeomorphic to $S^{n}.$
\end{theorem}

\begin{theorem}
\label{Cross Cap Stability}Let $\left\{ M_{\alpha }\right\} _{\alpha
=1}^{\infty }$ be a sequence of closed Riemannian $n$--manifolds with $%
\mathrm{sec}$ $M_{\alpha }\geq k$ so that 
\begin{equation*}
M_{\alpha }\longrightarrow C_{k,r}^{n}
\end{equation*}%
in the Gromov-Hausdorff topology. Then all but finitely many of the $%
M_{\alpha }$s are diffeomorphic to $\mathbb{R}P^{n}.$
\end{theorem}

Theorem $\ref{Cross Cap Stability}$ follows directly from Theorem 6.1 in 
\cite{KMS}, as all points in $C_{k,r}^{n}$ are $(n,0)$--strained. So this
paper is devoted to the proof of Theorem \ref{Purse Stab}. Our proof of
Theorem \ref{Purse Stab} is related to an alternative proof of Theorem $\ref%
{Cross Cap Stability}$ which was included in the original version of this
paper and is also in \cite{ProSillWilh}.

\begin{remark}
One can get Theorem \ref{Purse Stab} for the case $k=1$ and $r>\mathrm{arccot%
}\,\left( \frac{1}{\sqrt{n-3}}\right) $ as a corollary of Theorem C in \cite%
{GrovWilh2}. Theorem \ref{Cross Cap Stability} when $k=1$ and $r=\frac{\pi }{%
2}$ follows from the main theorem in \cite{Yam1} and the fact that $C_{1,%
\frac{\pi }{2}}^{n}$ is $\mathbb{R}P^{n}$ with constant curvature $1.$
\end{remark}

In \cite{GrovPet3}, Grove and Petersen proved the topological stability
theorems that are analogous to Theorems \ref{Purse Stab} and $\ref{Cross Cap
Stability}$. Perelman has since proved a much more general Topological%
\textbf{\ }Stability Theorem, which in particular implies the following.

\bigskip

\noindent \textbf{Topological Stability Theorem:} \emph{Let }$\{M_{\alpha
}\}_{\alpha }$\emph{\ be a sequence of closed Riemannian }$n$\emph{%
--manifolds with sectional curvature }$\geq k.$\emph{\ If the
Gromov-Hausdorff limit of }$\{M_{\alpha }\}_{\alpha }$\emph{\ is }$X$\emph{\
and }$\dim \left( X\right) =n,$\emph{\ then all but finitely many of the }$%
M_{\alpha }$\emph{'s are homeomorphic to }$X,$ \cite{Perel, Kap}$.$

\bigskip

In a similar way, Theorems \ref{Purse Stab} and $\ref{Cross Cap Stability}$
would follow from an affirmative answer to the following open question.

\bigskip

\noindent \textbf{Diffeomorphism Stability Question:} \emph{Let }$%
\{M_{\alpha }\}_{\alpha }$\emph{\ be a sequence of closed Riemannian }$n$%
\emph{--manifolds with sectional curvature }$\geq k.$\emph{\ If the
Gromov-Hausdorff limit of }$\{M_{\alpha }\}_{\alpha }$\emph{\ is }$X$\emph{\
and }$\dim \left( X\right) =n,$\emph{\ then are all but finitely many of the 
}$M_{\alpha }$\emph{'s diffeomorphic to each other }\cite{GrovWilh2}\emph{?}

\bigskip

An affirmative answer to the Diffeomorphism Stability Question would also
provide generalizations of Cheeger's Finiteness Theorem and the Diameter
Sphere Theorem \cite{Cheeg1}, \cite{Cheeg2}, \cite{GrovShio}, \cite%
{GrovWilh2}.

\begin{definition}
Let $\mathcal{M}_{k}\left( n\right) $ be the class of closed Riemannian $n$%
--manifolds with sectional curvature $\geq k.$ A compact, $n$--dimensional $%
X\in \mathrm{closure}\left( \mathcal{M}_{k}\left( n\right) \right) $ is
called \emph{diffeomorphically stable} if for any sequence $\left\{
M_{\alpha }\right\} _{\alpha =1}^{\infty }\subset \mathcal{M}_{k}\left(
n\right) $ with $M_{\alpha }\longrightarrow X,$ in the Gromov--Hausdorff
topology, all but finitely many of the $M_{\alpha }$s are diffeomorphic to
each other.
\end{definition}

Together, Theorem \ref{Purse Stab} and Corollary E of \cite{GrovWilh1} say
that purses and the so-called \textquotedblleft lemons\textquotedblright\ of 
\cite{GrovPet3} are diffeomorphically stable. These are the only known
diffeomorphically stable limit spaces having a space of directions that is
Gromov--Hausdorff far from the unit sphere.

The proof of Theorem \ref{Purse Stab} starts with the simple observation
that the purse, $P_{k,r}^{n},$ can be topologically identified with the
disjoint union of $D^{n-1}\times S^{1}$ and $S^{n-2}\times D^{2}$ glued
together via the identity map of their common boundary $S^{n-2}\times S^{1}.$
(See figure { \ref{pursefig})} Using this, we show that if $\left\{ M_{\alpha
}\right\} _{\alpha }$ is as in Theorem \ref{Purse Stab}, then for $\alpha $
sufficiently large, $M_{\alpha }$ is diffeomorphic to the disjoint union of $%
D^{n-1}\times S^{1}$ and $S^{n-2}\times D^{2}$ glued together via a
diffeomorphism $f$ of $S^{n-2}\times S^{1}.$ That is, $M_{\alpha }$ is
diffeomorphic to 
\begin{equation}
D^{n-1}\times S^{1}\cup _{f}S^{n-2}\times D^{2}.  \label{handle decop dfn}
\end{equation}%
We show, moreover, that the diffeomorphism $f:S^{n-2}\times
S^{1}\longrightarrow S^{n-2}\times S^{1}$ satisfies 
\begin{equation*}
p_{n-2}\circ f=p_{n-2},
\end{equation*}%
where 
\begin{equation*}
p_{n-2}:S^{n-2}\times S^{1}\longrightarrow S^{n-2}
\end{equation*}%
is projection to the first factor.

Notice that a diffeomorphism $f:S^{n-2}\times S^{1}\longrightarrow
S^{n-2}\times S^{1}$ so that $p_{n-2}\circ f=p_{n-2}$ gives rise to an
element of $\pi _{n-2}\left( \mathrm{Diff}_{+}\left( S^{1}\right) \right) .$
If two such diffeomorphisms give the same homotopy class, then the
construction (\ref{handle decop dfn}) yields diffeomorphic manifolds (cf. 
\cite{GrovWilh2}). Since the group of orientation preserving diffeomorphisms
of the circle deformation retracts to $SO\left( 2\right) ,$ it follows that $%
M_{\alpha }$ is diffeomorphic to $S^{n}$ for all $\alpha $ sufficiently
large.

To construct the decomposition (\ref{handle decop dfn}) we start with the
observation that the singularities of $P_{k,r}^{n}$ occur along a constant
curvature sphere of codimension $2,$ that we call $\mathcal{S}^{n-2}.$ The
construction of $P_{k,r}^{n}$ also allows us to view $\mathcal{S}^{n-2}$ as
the boundary of $\mathcal{D}_{k}^{n-1}\left( r\right) .$ As in \cite{OSY},
we then write coordinate functions $f_{i}$ of $\mathcal{D}_{k}^{n-1}\left(
r\right) $ in terms of distance functions from points of $\mathcal{S}^{n-2}.$
The formulas for these coordinate functions also make sense on $P_{k,r}^{n}$
and, as in \cite{OSY}, restrict to an isometric embedding of $\mathcal{S}%
^{n-2}$ into $\mathbb{R}^{n-1}.$ Since the $f_{i}$'s are written in terms of
distance functions, they have lifts, $f_{i}^{\alpha },$ to the $M_{\alpha }$%
's. Using these lifts, we define 
\begin{eqnarray*}
\Psi ^{\alpha } &:&M_{\alpha }\longrightarrow \mathbb{R}^{n-1} \\
\Psi ^{\alpha } &=&\left( f_{1}^{\alpha },f_{2}^{\alpha },\ldots
,f_{n-1}^{\alpha }\right) .
\end{eqnarray*}%
We then show that the restriction of $\Psi ^{\alpha }$ to a subset $%
E_{D}^{\alpha }\subset M_{\alpha }$ is a trivial $S^{1}$--bundle over $%
D^{n-1},$ and the restriction of $\Psi ^{\alpha }$ to $M\setminus \mathrm{int%
}\left( E_{D}^{\alpha }\right) $ is a trivial $D^{2}$--bundle over $S^{n-2}.$
In other words, $E_{D}^{\alpha }\cong D^{n-1}\times S^{1}$ and $M\setminus 
\mathrm{int}\left( E_{D}^{\alpha }\right) \cong S^{n-2}\times D^{2},$ as in (%
\ref{handle decop dfn}).

\begin{remark}
It was shown in \cite{GrovWilh2} that the presence of a decomposition of the
form $M_{\alpha }=D^{n-1}\times S^{1}\cup _{f}S^{n-2}\times D^{2}$ with $%
p_{n-2}\circ f=p_{n-2}$ has an equivalent formulation in terms of the
Gromoll Filtration of the group of exotic $n$--spheres. We review the
details of this alternative formulation at the beginning of Section \ref%
{purse stab section}.
\end{remark}

Section 2 introduces notations and conventions. Section 3 is a review of
necessary tools from Alexandrov geometry, and Theorem \ref{Purse Stab} is
proven in Section 4.

Throughout the remainder of the paper, we assume, without loss of
generality, by rescaling if necessary, that $k=-1,0$ or $1$.

\begin{acknowledgment}
We are grateful to Stefano Vidussi for several conversations about exotic
differentiable structures on $\mathbb{R}P^{4}.$

We are grateful to the referees of this paper for making us aware of the
results in \cite{KMS}, and for valuable expository suggestions.
\end{acknowledgment}

\section{Conventions and Notations}

Recall that an Alexandrov space is a complete, locally compact, intrinsic
metric space with a lower curvature bound in the triangle comparison sense.
We will assume a basic familiarity with Alexandrov spaces, including, but
not limited to, \cite{BGP}. We list here several conventions that will be
used freely throughout.

Let $X$ be an $n$--dimensional Alexandrov space and $x,p,y\in X$. We call
minimal geodesics in $X$ \emph{segments} and denote by $px$ a segment in $X$
with endpoints $p$ and $x$. We let $\Sigma _{p}$ and $T_{p}X$ denote the
space of directions and tangent cone at $p$, respectively. For a geodesic
direction $v\in T_{p}X,$ we let $\gamma _{v}$ be the segment whose initial
direction is $v.$ Following \cite{Pet}, we let $\Uparrow _{x}^{p}\subset
\Sigma _{x}$ denote the set of directions of segments from $x$ to $p,$ and
we let $\uparrow _{x}^{p}\in $ $\Uparrow _{x}^{p}$ be the direction of a
single segment from $x$ to $p.$ We let $\sphericalangle (x,p,y)$ denote the
angle of a hinge formed by $px$ and $py$ and $\tilde{\sphericalangle}(x,p,y)$
denote the corresponding comparison angle.

Following \cite{OSY}, we let $\tau :\mathbb{R}^{k}\rightarrow \mathbb{R}_{+}$
be any function that satisfies 
\begin{equation*}
\lim_{x_{1},\ldots ,x_{k}\rightarrow 0}\tau \left( x_{1},\ldots
,x_{k}\right) =0,
\end{equation*}%
and abusing notation, we let $\tau :\mathbb{R}^{k}\times \mathbb{R}%
^{n}\rightarrow \mathbb{R}$ be any function that satisfies 
\begin{equation*}
\lim_{x_{1},\ldots ,x_{k}\rightarrow 0}\tau \left( x_{1},\ldots
,x_{k}|y_{1},\ldots ,y_{n}\right) =0,
\end{equation*}%
provided $y_{1},\ldots ,y_{n}$ remain fixed. When making an estimate with a
function $\tau ,$ we implicitly assert the existence of such a function for
which the estimate holds.

For $p\in X$ and $r>0,$ we set 
\begin{equation*}
B\left( p,r\right) \equiv \left\{ \left. x\in X\text{ }\right\vert \text{ 
\textrm{dist}}\left( x,p\right) <r\right\} .
\end{equation*}

%
%
%

\section{Basic Tools From Alexandrov Geometry}

Strainers, as defined in \cite{BGP}, form the core of the calculus arguments
used to prove our main theorem. To motivate them let $\left\{ v_{i}\right\}
_{i=1}^{n}$ be an orthonormal basis for $\mathbb{R}^{n}$. Notice that the
gradients of the distance functions from the $v_{i}$s are orthonormal at $0$
and almost orthonormal in a neighborhood $N$ of $0.$ Thus the map $%
f:N\rightarrow \mathbb{R}^{n},$ 
\begin{equation*}
f(x)=\left( \mathrm{dist}\left( v_{1},x\right) ,\mathrm{dist}\left(
v_{2},x\right) ,\ldots ,\mathrm{dist}\left( v_{n},x\right) \right)
\end{equation*}%
is a bi-Lipschitz embedding with Lipschitz constants that converge to $1$ as 
$N$ gets smaller.

By exponentiating an orthonormal basis, it is easy to re-create these data
around a point in a Riemannian manifold. This plus the fact that comparison
angles are continuous leads us to the definition of strainers.

\begin{definition}
Let $X$ be an Alexandrov space. A point $x\in X$ is said to be $\left(
n,\delta ,r\right) $--strained by the strainer $\left\{ \left(
a_{i},b_{i}\right) \right\} _{i=1}^{n}\subset X\times X$ provided that for
all $i\neq j$ we have%
\begin{equation*}
\begin{array}{ll}
\widetilde{\sphericalangle }\left( a_{i},x,b_{j}\right) >\frac{\pi }{2}%
-\delta , & \widetilde{\sphericalangle }\left( a_{i},x,b_{i}\right) >\pi
-\delta , \\ 
\widetilde{\sphericalangle }\left( a_{i},x,a_{j}\right) >\frac{\pi }{2}%
-\delta , & \widetilde{\sphericalangle }\left( b_{i},x,b_{j}\right) >\frac{%
\pi }{2}-\delta ,\text{ and} \\ 
\multicolumn{2}{c}{\min_{i=1,\ldots ,n}\left\{ \mathrm{dist}%
(\{a_{i},b_{i}\},x)\right\} >r.}%
\end{array}%
\end{equation*}

We say $B\subset X$ is $(n,\delta ,r)$--strained with strainer $\{\left(
a_{i},b_{i}\right) \}_{i=1}^{n}$ provided every point $x\in B$ is $(n,\delta
,r)$--strained by $\{\left( a_{i},b_{i}\right) \}_{i=1}^{n}$.
\end{definition}

The following is observed in \cite{Yam2}.

\begin{proposition}
\label{posdeltastrainedrad} Let $X$ be a compact $n$-dimensional Alexandrov
space. Then the following are equivalent:

\noindent 1. There is a (sufficiently small) $\eta >0$ so that for every $%
p\in X,$%
\begin{equation*}
\mathrm{dist}_{G-H}\left( \Sigma _{p},S^{n-1}\right) <\eta .
\end{equation*}

\noindent 2. There is a (sufficiently small) $\delta >0$ and an $r>0$ such
that $X$ is covered by finitely many $(n,\delta ,r)$--strained neighborhoods.
\end{proposition}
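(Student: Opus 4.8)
The plan is to prove the two implications separately; the common mechanism is Toponogov's theorem, used in two standard forms. First, in any space with $\mathrm{curv}\ge k$ one always has $\widetilde{\sphericalangle}(a,x,b)\le\sphericalangle(a,x,b)$. Second, the quantitative rigidity behind an angle near $\pi$: if $\sphericalangle(a,x,b)>\pi-\delta$ and $\mathrm{dist}(\{a,b\},x)>r$, then $\widetilde{\sphericalangle}(a,x,b)>\pi-\tau(\delta\mid r,k)$ --- equivalently $x$ lies within $\tau(\delta\mid r,k)$ of a segment from $a$ to $b$ --- and likewise an angle $>\frac{\pi}{2}-\delta$ between segments of length $>r$ forces a comparison angle $>\frac{\pi}{2}-\tau(\delta\mid r,k)$. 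I also use that $\Sigma_x$ is an $(n-1)$-dimensional Alexandrov space with $\mathrm{curv}\ge1$, so that $\mathrm{diam}\,\Sigma_x\le\pi$ and a comparison triangle in $S^2$ exists for every triple of its points, and that the directions of segments issuing from $x$ are dense in $\Sigma_x$.

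For $(2)\Rightarrow(1)$, fix $p\in X$ and a strainer $\{(a_i,b_i)\}_{i=1}^n$ for $p$. Since $\widetilde{\sphericalangle}\le\sphericalangle$, the defining inequalities imply that the $2n$ directions $\xi_i=\uparrow_p^{a_i}$ and $\eta_i=\uparrow_p^{b_i}$ satisfy $\mathrm{dist}_{\Sigma_p}(\xi_i,\eta_i)>\pi-\delta$ while all other pairwise distances exceed $\frac{\pi}{2}-\delta$. Because $\mathrm{curv}\,\Sigma_p\ge1$, the $S^2$ comparison triangle of any triple $\{v,\xi_j,\eta_j\}$ has perimeter $\le2\pi$, so $\mathrm{dist}_{\Sigma_p}(v,\xi_j)+\mathrm{dist}_{\Sigma_p}(v,\eta_j)\le2\pi-\mathrm{dist}_{\Sigma_p}(\xi_j,\eta_j)<\pi+\delta$ for every $v\in\Sigma_p$; feeding in $v=\xi_i$ and $v=\eta_i$ then forces every remaining pairwise distance to be $<\frac{\pi}{2}+2\delta$ as well. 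Thus $\{\xi_i,\eta_i\}$ is a $\tau(\delta)$-almost orthonormal basis of $\Sigma_p$, and it is standard that an $(n-1)$-dimensional Alexandrov space with $\mathrm{curv}\ge1$ carrying such a configuration is $\tau(\delta\mid n)$-close to $S^{n-1}$: one argues by induction on $n$, at each step invoking the stability form of the maximal diameter theorem (a $\mathrm{curv}\ge1$ space of diameter $>\pi-\delta$ is Gromov--Hausdorff close to the spherical suspension of its equator) and noting that the remaining strainer pairs restrict to an almost orthonormal basis of that equator. Setting $\eta=\tau(\delta\mid n)$ gives statement (1).

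For $(1)\Rightarrow(2)$, fix $p$. Because $\Sigma_p$ is $\eta$-close to $S^{n-1}$ and almost orthonormal configurations are stable, density of geodesic directions lets me choose segments $pa_i,pb_i$, all of a common length $\ell_0=\ell_0(p)>0$, whose initial directions form a $\tau(\eta)$-almost orthonormal basis of $\Sigma_p$. Then the genuine angles satisfy $\sphericalangle(a_i,p,b_i)>\pi-\tau(\eta)$, and $\sphericalangle(a_i,p,a_j),\sphericalangle(a_i,p,b_j),\sphericalangle(b_i,p,b_j)\in(\frac{\pi}{2}-\tau(\eta),\frac{\pi}{2}+\tau(\eta))$ for $i\ne j$, so the two Toponogov facts convert them into the comparison-angle inequalities defining an $(n,\tau(\eta\mid\ell_0,k),\ell_0/2)$-strainer for $p$. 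Since comparison angles and distances depend continuously on their arguments --- and each $\mathrm{dist}(\{a_i,b_i\},p)=\ell_0$ exceeds $\ell_0/2$ with room to spare --- the same strainer $(n,\tau(\eta\mid\ell_0,k),\ell_0/2)$-strains an entire ball $B(p,\rho_p)$ for some $\rho_p>0$. The balls $B(p,\rho_p/2)$, $p\in X$, cover the compact space $X$; passing to a finite subcover $B(p_1,\rho_{p_1}/2),\dots,B(p_N,\rho_{p_N}/2)$, letting $\delta$ be the largest of the finitely many error parameters that occur and $r$ the smallest of the numbers $\ell_0(p_l)/2$, and using that decreasing $r$ and increasing $\delta$ only weaken the strainer conditions, we see that $X$ is covered by finitely many $(n,\delta,r)$-strained neighborhoods, with $\delta$ as small as we please once $\eta$ is. This is statement (2).

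The main obstacle is the step, inside $(1)\Rightarrow(2)$, that produces the \emph{comparison}-angle inequality $\widetilde{\sphericalangle}(a_i,p,b_i)>\pi-\delta$. The lower curvature bound supplies only $\widetilde{\sphericalangle}\le\sphericalangle$, the harmless direction, so knowing merely that $\Sigma_p$ is close to $S^{n-1}$ --- which yields near-antipodal \emph{directions} at $p$ --- does not by itself place $p$ near a segment joining $a_i$ and $b_i$; indeed a geodesic through $p$ in a prescribed direction need not exist at all, already at a slightly conical point whose space of directions is a round sphere of diameter just below $\pi$. What closes this gap is exactly the quantitative rigidity recorded in the first paragraph --- an angle near $\pi$ forcing near-collinearity --- and this is where Toponogov's theorem, hence the curvature hypothesis, enters essentially; the statement is false for general metric spaces. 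The remaining ingredients --- the induction on dimension in $(2)\Rightarrow(1)$ and the compactness bookkeeping needed to extract a \emph{finite} cover with uniform $\delta$ and $r$ in $(1)\Rightarrow(2)$ --- are routine.
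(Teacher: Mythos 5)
The step that fails is the second ``standard form of Toponogov'' in your first paragraph --- that $\sphericalangle(a,x,b)>\pi-\delta$ together with $\mathrm{dist}(\{a,b\},x)>r$ forces $\widetilde{\sphericalangle}(a,x,b)>\pi-\tau(\delta\mid r,k)$, and its analogue at $\frac{\pi}{2}$ --- and it is exactly what your $(1)\Rightarrow(2)$ argument (and your closing paragraph) leans on. A lower curvature bound gives only $\widetilde{\sphericalangle}\le\sphericalangle$; it never converts a large angle into a large comparison angle at a definite scale. Counterexample: the round sphere of radius $\rho$ has $\mathrm{curv}\ge 0$ and every $\Sigma_p$ exactly round (so $\eta=0$); at the north pole $p$ take the two segments of length $\ell_0$ slightly less than $\pi\rho$ running down opposite meridians to points $a,b$ near the south pole. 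Their angle at $p$ is exactly $\pi$, yet $\mathrm{dist}(a,b)=2\pi\rho-2\ell_0$ is tiny, so the $k=0$ comparison angle $\widetilde{\sphericalangle}(a,p,b)$ is near $0$, not near $\pi$, and $p$ is nowhere near a segment $ab$. Consequently the configuration you build need not be an $(n,\tau(\eta\mid\ell_0,k),\ell_0/2)$--strainer at all: the necessary smallness of $\ell_0$ cannot be controlled by $\eta$, $k$ and a lower bound on the side lengths, which is precisely why your claimed error $\tau(\eta\mid\ell_0,k)$ is not available.

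What actually closes the gap is not a rigidity statement at a definite scale but the monotonicity built into the definition of angle: for fixed segments from $p$, $\widetilde{\sphericalangle}(\gamma_{\xi}(s),p,\gamma_{\eta}(t))$ is nonincreasing in $s,t$ and increases to $\sphericalangle(\xi,\eta)$ as $s,t\to 0$. So after choosing almost orthonormal geodesic directions you must take $a_i=\gamma_{\xi_i}(\ell)$, $b_i=\gamma_{\eta_i}(\ell)$ with $\ell\le\ell(p)$ small \emph{depending on $p$}; this makes the comparison angles at $p$ (hence, by continuity of comparison angles, on a small ball about $p$) within $\tau(\eta)$ of the model values, and then your compactness bookkeeping goes through verbatim, with $r$ the minimum of the finitely many scales and $\delta\le\tau(\eta)$ uniform. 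Your $(2)\Rightarrow(1)$ direction is essentially correct: $\widetilde{\sphericalangle}\le\sphericalangle$ plus the perimeter bound $\le 2\pi$ in $\Sigma_p$ makes the straining directions a global $(n,2\delta)$--strainer of $\Sigma_p$, and rather than your induction sketch via almost-maximal diameter you can simply invoke Theorem \ref{BGP--SOY} to get $\Sigma_p$ bi-Lipschitz, hence Gromov--Hausdorff, close to $S^{n-1}$. Note also that the paper gives no proof of this proposition --- it is quoted from \cite{Yam2} --- so the comparison here is with the standard argument, whose overall skeleton you have, but whose key analytic step you justified with a false lemma.
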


\begin{theorem}
\label{BGP coordinates}(\cite{BGP} Theorem 9.4) Let $X$ be an $n$%
--dimensional Alexandrov space with curvature bounded from below. Let $p\in
X $ be $\left( n,\delta ,r\right) $--strained by $\left\{ \left(
a_{i},b_{i}\right) \right\} _{i=1}^{n}.$ Provided $\delta $ is small enough,
there is a $\rho >0$ such that the map $f:B(p,\rho )\rightarrow \mathbb{R}%
^{n}$ defined by 
\begin{equation*}
f(x)=\left( \mathrm{dist}\left( a_{1},x\right) ,\mathrm{dist}\left(
a_{2},x\right) ,\ldots ,\mathrm{dist}\left( a_{n},x\right) \right)
\end{equation*}%
is a bi-Lipschitz embedding with Lipschitz constants in $\left( 1-\tau
\left( \delta ,\rho \right) ,1+\tau \left( \delta ,\rho \right) \right) .$
\end{theorem}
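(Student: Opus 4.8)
The plan is to prove the two-sided estimate
\[
(1-\tau(\delta,\rho))\,\dist(x,y)\le|f(x)-f(y)|\le(1+\tau(\delta,\rho))\,\dist(x,y),\qquad x,y\in B(p,\rho);
\]
injectivity of $f$, and hence the assertion that it is a bi-Lipschitz embedding, follows at once from the lower bound. Set $\ell=\dist(x,y)$ and $d_i=\dist(a_i,x)$. Applying the law of cosines in the model plane of curvature $k$ to the comparison triangle of $(a_i,x,y)$, and using that $d_i>r-\rho$ stays bounded away from $0$ (and distances in $X$ stay in the range where the model law of cosines is smooth) uniformly in $x\in B(p,\rho)$, gives
\[
\dist(a_i,y)-\dist(a_i,x)=-\ell\cos\widetilde{\sphericalangle}(a_i,x,y)+O(\ell^2),
\]
with an $O(\ell^2)$--constant depending only on $k$ and $r$. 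Squaring and summing over $i$,
\[
|f(x)-f(y)|^2=\ell^2\sum_{i=1}^n\cos^2\widetilde{\sphericalangle}(a_i,x,y)+O(\ell^3),
\]
so the theorem reduces to the \emph{almost Pythagorean identity}
\[
\sum_{i=1}^n\cos^2\widetilde{\sphericalangle}(a_i,x,y)=1\pm\tau(\delta,\rho)\qquad\text{for all }x,y\in B(p,\rho),
\]
after which one chooses $\rho$ small relative to $\delta$.

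To prove the identity I would first upgrade the strainer: since the comparison-angle inequalities defining a strainer are open, shrinking $\rho$ ensures that every $x\in B(p,\rho)$ is $(n,\delta',r/2)$--strained by $\{(a_i,b_i)\}$ with $\delta'=\delta+\tau(\rho)$. By Toponogov's theorem, $\widetilde{\sphericalangle}(a_i,x,b_i)>\pi-\delta'$ forces $\uparrow_x^{b_i}$ to be a near-antipode of $u_i:=\uparrow_x^{a_i}$ in $\Sigma_x$, and in an Alexandrov space of curvature $\ge 1$ near-antipodality gives $\angle(v,u_i)+\angle(v,\uparrow_x^{b_i})=\pi\pm\tau(\delta')$ for every $v\in\Sigma_x$; combining this with the remaining strainer inequalities at $x$ yields $\angle(u_i,u_j)=\tfrac{\pi}{2}\pm\tau(\delta')$ for $i\neq j$, so that $\{u_i\}_{i=1}^n$ is a $\tau(\delta')$--almost orthonormal frame in $\Sigma_x$. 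The same near-antipodality also controls the comparison angles measured from the strainer points: because $a_i$ and $b_i$ make the broken segment $a_ixb_i$ nearly straight, $\widetilde{\sphericalangle}(a_i,x,y)$ differs from the honest angle $\angle(u_i,\uparrow_x^y)$ by at most $\tau(\delta',\rho)$.

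It remains to pin down the metric of $\Sigma_x$. By induction on the dimension — the present theorem applied inside the $(n-1)$--dimensional, curvature $\ge 1$ space $\Sigma_x$, which is strained by an $(n-1)$--element subfamily of the pairs $\{(u_i,\uparrow_x^{b_i})\}$, equivalently by the local content of the implication $2\Rightarrow 1$ in Proposition \ref{posdeltastrainedrad} — the space $\Sigma_x$ is $\tau(\delta')$--Gromov--Hausdorff close to the round $S^{n-1}$, uniformly in $x\in B(p,\rho)$, with the $u_i$ matched to a standard basis $e_1,\dots,e_n$ of $\R^n$. Writing $\bar v\in S^{n-1}$ for the image of $\uparrow_x^y$ under this approximation, angles change by at most $\tau$, so
\[
\sum_{i=1}^n\cos^2\widetilde{\sphericalangle}(a_i,x,y)=\sum_{i=1}^n\langle\bar v,e_i\rangle^2\pm\tau(\delta',\rho)=|\bar v|^2\pm\tau(\delta',\rho)=1\pm\tau(\delta',\rho),
\]
which is the required identity.

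The main obstacle is precisely this passage to the space of directions: one must show, uniformly over $B(p,\rho)$ and with effective error bounds, that $\Sigma_x$ is close to a round sphere and that the functions $\widetilde{\sphericalangle}(a_i,x,\cdot)$ behave like Euclidean coordinates — this is where the dimension-reducing nature of the strainer formalism and the rigidity of curvature $\ge 1$ Alexandrov spaces carry the argument, and where the induction has to be organized so the errors do not accumulate. Everything else is bookkeeping: tracking how each $\tau$ depends on $x$ and $y$, and fixing the order of the quantifiers — $\delta$ small first, then $\rho=\rho(\delta)$ small.
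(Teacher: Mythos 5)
The paper does not prove this statement at all --- it is quoted verbatim from \cite{BGP} (Theorem 9.4) --- so the comparison is with the cited source rather than with an in-paper argument. Your outline is the standard modern route (the one implicit in \cite{OSY}, \cite{OS}, \cite{Yam2}): expand each coordinate by the model law of cosines, reduce everything to the almost--Pythagorean estimate $\sum_i\cos^2\tilde{\sphericalangle}(a_i,x,y)=1\pm\tau(\delta,\rho)$, and prove that estimate by passing to $\Sigma_x$, where the straining directions form a $\tau(\delta)$--orthonormal frame and $\Sigma_x$ is $\tau(\delta)$--close to the round $S^{n-1}$. This is sound, and it differs in organization from BGP's own proof of 9.4, which is arranged \emph{before} their sphere theorem 9.5 and works directly with comparison angles and the nonexistence of $(n+1,\delta)$--strained points in an $n$--dimensional space; your version instead leans on exactly the content of the paper's Theorem \ref{BGP--SOY} (equivalently the implication $2\Rightarrow 1$ of Proposition \ref{posdeltastrainedrad}), i.e.\ on a result of the same depth as the theorem being proved. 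That is legitimate only because the closeness of the $(n-1)$--dimensional space $\Sigma_x$ to $S^{n-1}$ lives one dimension down, so the whole package (9.4 and 9.5) must be proved by a joint induction on dimension --- you flag this, and it is the real content you are deferring rather than proving.

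Two specific corrections. First, your parenthetical that $\Sigma_x$ is strained by an ``$(n-1)$--element subfamily'' of the pairs $\{(\uparrow_x^{a_i},\uparrow_x^{b_i})\}$ is wrong as stated: to force $\Sigma_x$ to be close to $S^{n-1}$ you need all $n$ pairs (an $(n-1)$--dimensional space of curvature $\geq 1$ that is globally strained by only $n-1$ pairs can be, e.g., a hemisphere with the strainer on its boundary sphere), and you also need all $n$ directions $\uparrow_x^{a_i}$ in the final sum anyway; the lower bounds on the mutual angles come from ``angle $\geq$ comparison angle'' and the upper bounds from the perimeter $\leq 2\pi$ property of curvature $\geq 1$, exactly as you say. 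Second, you do not need the $u_i$ ``matched to a standard basis'': it suffices that their images $w_i\in S^{n-1}$ under the $\tau(\delta)$--bi-Lipschitz map are pairwise almost orthogonal, since then the Gram matrix of $\{w_i\}$ is $I+O(\tau)$ and $\sum_i\langle \bar v,w_i\rangle^2=1\pm\tau$ for every unit $\bar v$. Finally, in the $k=1$ model your $O(\ell^2)$ expansion needs $\sin\mathrm{dist}(a_i,x)$ bounded away from $0$; this follows from the near-straightness of $a_ixb_i$ (so $\mathrm{dist}(a_i,x)+\mathrm{dist}(x,b_i)\leq\pi+\tau(\delta)$) together with $\mathrm{dist}(x,b_i)>r-\rho$, a point worth one line rather than an assertion. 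With these repairs the argument is correct, with the quantifier order you indicate ($\delta$ first, then $\rho=\rho(\delta)$).
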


If $B$ is $(n,\delta ,r)$--strained by $\{a_{i},b_{i}\}_{i=1}^{n}$, any
choice of $2n$--directions, $\left\{ \left( \uparrow _{x}^{a_{i}},\uparrow
_{x}^{b_{i}}\right) \right\} _{i=1}^{n},$ where $x\in B,$ will be called a
set of straining directions for $\Sigma _{x}.$ As in, \cite{BGP} and \cite%
{Yam2}, we say an Alexandrov space $\Sigma $ with $\mathrm{curv\,}\Sigma
\geq 1$ is globally $(m,\delta )$-strained by pairs of subsets $%
\{A_{i},B_{i}\}_{i=1}^{m}$ provided 
\begin{equation*}
\begin{array}{ll}
|\mathrm{dist}(a_{i},b_{j})-\frac{\pi }{2}|<\delta , & \mathrm{dist}%
(a_{i},b_{i})>\pi -\delta , \\ 
|\mathrm{dist}(a_{i},a_{j})-\frac{\pi }{2}|<\delta , & |\mathrm{dist}%
(b_{i},b_{j})-\frac{\pi }{2}|<\delta%
\end{array}%
\end{equation*}%
for all $a_{i}\in A_{i}$, $b_{i}\in B_{i}$ and $i\neq j$.

\begin{theorem}
\label{BGP--SOY}(\cite{BGP}, Theorem 9.5, cf. also \cite{OSY}, Section 3)
Let $\Sigma $ be an $\left( n-1\right) $--dimensional Alexandrov space with
curvature $\geq 1.$ Suppose $\Sigma $ is globally strained by $%
\{A_{i},B_{i}\}$. There is a map $\tilde{\Psi}:\mathbb{R}^{n}\longrightarrow
S^{n-1}$ so that $\Psi :\Sigma \rightarrow S^{n-1}$ defined by 
\begin{equation*}
\Psi (x)=\tilde{\Psi}\circ \left( \mathrm{dist}\left( A_{1},x\right) ,%
\mathrm{dist}\left( A_{2},x\right) ,\ldots ,\mathrm{dist}\left(
A_{n},x\right) \right)
\end{equation*}%
is a bi-Lipschitz homeomorphism with Lipschitz constants in $\left( 1-\tau
\left( \delta \right) ,1+\tau \left( \delta \right) \right) $.
\end{theorem}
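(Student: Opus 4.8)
The plan is to realize $\Psi$ as a normalization of the distance--coordinate map, prove it is a local homeomorphism with Lipschitz constants $1\pm\tau(\delta)$, and then promote this to a global bi--Lipschitz homeomorphism; this is the scheme of \cite{BGP}, Theorem~9.5, with the quantitative control of the Lipschitz constants as in \cite{OSY}, Section~3. First I would record that the global strainer relations force each $A_i$ and $B_i$ to have diameter $<\tau(\delta)$: if $a,a'\in A_i$, then using $\dist(a,b_i),\dist(a',b_i)>\pi-\delta$ together with the fact that every triangle in a space with $\mathrm{curv}\geq 1$ has perimeter $\leq 2\pi$ gives $\dist(a,a')<2\delta$. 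Hence, fixing representatives $a_i\in A_i$, the map $F=(\dist(A_1,\cdot),\dots,\dist(A_n,\cdot))$ agrees up to $\tau(\delta)$ with $(\dist(a_1,\cdot),\dots,\dist(a_n,\cdot))$. In the model $\Sigma=S^{n-1}$ with $a_i=e_i$, $b_i=-e_i$, one has $F(x)=(\arccos x_1,\dots,\arccos x_n)$, and applying $\cos$ coordinatewise and then projecting radially onto $S^{n-1}$ recovers $x$; accordingly I would set, on the open set where the denominator does not vanish,
\[
\tilde\Psi(t_1,\dots,t_n)=\frac{(\cos t_1,\dots,\cos t_n)}{\bigl|(\cos t_1,\dots,\cos t_n)\bigr|},
\]
so that $\tilde\Psi\circ F=\mathrm{id}_{S^{n-1}}$ in the model, and $\Psi:=\tilde\Psi\circ F$ in general.

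The core step is to show that $\Phi:=(\cos\dist(a_1,\cdot),\dots,\cos\dist(a_n,\cdot))$ is a $\tau(\delta)$--almost Riemannian isometric embedding of $\Sigma$ into $S^{n-1}\subset\R^n$. Concretely I would establish: (i) $\sum_i\cos^2\dist(a_i,x)=1+\tau(\delta)$ for every $x$ (so $\Phi$ lands near $S^{n-1}$ and $F(\Sigma)$ avoids the bad set of $\tilde\Psi$); (ii) along every unit--speed geodesic $\gamma$ of $\Sigma$, $\sum_i\bigl(\tfrac{d}{dt}\cos\dist(a_i,\gamma(t))\bigr)^2=1+\tau(\delta)$, i.e.\ the gradients $\nabla\cos\dist(a_i,\cdot)$ form an almost tight frame of $T_x\Sigma$; and (iii) $F$ is injective up to $\tau(\delta)$, meaning $\dist(a_i,x)=\dist(a_i,y)$ for all $i$ forces $\dist_\Sigma(x,y)<\tau(\delta)$. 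The proofs combine the triangle comparison available because $\mathrm{curv}\,\Sigma\geq 1$ with the local coordinate theorem (Theorem~\ref{BGP coordinates}); the decisive use of having $n=\dim\Sigma+1$ distance functions is precisely here, since (i) pins down the ``extra'' function $\cos\dist(a_n,\cdot)$ as the $\tau(\delta)$--correct dependent coordinate on the image, which is exactly what makes the single global map $\tilde\Psi$ compatible with the strained charts. Granting (i)--(iii) and $\dim\Sigma=\dim S^{n-1}$, $\Psi$ is a local homeomorphism satisfying, for $x,y$ sufficiently close,
\[
(1-\tau(\delta))\,\dist_\Sigma(x,y)\leq \dist_{S^{n-1}}(\Psi x,\Psi y)\leq(1+\tau(\delta))\,\dist_\Sigma(x,y).
\]

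To globalize: since $\Sigma$ is compact, $\Psi$ is proper, and being a proper local homeomorphism onto the connected manifold $S^{n-1}$ it is a covering map, hence surjective. For $n\geq 3$, $S^{n-1}$ is simply connected, so $\Psi$ is a homeomorphism. For $n=2$, $\Sigma$ is a $1$--dimensional Alexandrov space with $\mathrm{curv}\geq 1$; it cannot be an interval (an interval of diameter $\leq\pi$ admits no two almost--antipodal, mutually almost--perpendicular pairs of points), so it is a circle of length $\leq 2\pi$, the strainer relations force the length to be $2\pi-\tau(\delta)$, and the estimates of the previous paragraph make $\Psi\colon\Sigma\to S^1$ degree one, hence a homeomorphism. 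In every case $\Psi$ is a homeomorphism that is locally $(1\pm\tau(\delta))$--bi--Lipschitz; since $\Sigma$ and $S^{n-1}$ are length spaces, integrating the local estimates along minimizing geodesics of $\Sigma$ and of $S^{n-1}$ (the latter pulled back by $\Psi^{-1}$) yields the asserted global bound with the same constants.

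The main obstacle is obtaining estimates (i) and (ii) with error $\tau(\delta)$ rather than merely a fixed constant. The local strainer calculus by itself does not suffice, since not every point of $\Sigma$ need be $(n-1,\delta,r)$--strained by subpairs of $\{A_i,B_i\}$ (already for $\Sigma=S^{n-1}$ with the coordinate--axis strainer this fails when $n$ is small). Instead one must exploit the rigidity of the global configuration through the Hessian comparison $\mathrm{Hess}\,\cos\dist(a_i,\cdot)\leq-\cos\dist(a_i,\cdot)\cdot g$, valid when $\mathrm{curv}\geq 1$, applied to $\sum_i\cos^2\dist(a_i,\cdot)$ and to its restriction to geodesics together with the values at the $2n$ strainer points, which is in essence the computation carried out in \cite{OSY}, Section~3 (and the corresponding part of \cite{BGP}, Section~9).
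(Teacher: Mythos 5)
You should first note that the paper itself contains no proof of this statement: it is imported verbatim from \cite{BGP} (Theorem 9.5) and \cite{OSY} (Section 3), and the only original content nearby is the remark immediately following, which records exactly your normalized-cosine formula for $\tilde{\Psi}$. So the comparison can only be with the cited arguments, and your skeleton is faithful to them: the perimeter bound in curvature $\geq 1$ giving $\mathrm{diam}\,A_i<2\delta$, the map $\Phi=(\cos\dist(a_1,\cdot),\ldots,\cos\dist(a_n,\cdot))$, the three estimates (i)--(iii), the properness/covering-space argument to globalize (with the separate $n=2$ case), and the integration of local bi-Lipschitz bounds along geodesics are all the right shape, and your identification of where the real work lies --- (i) and (ii) with error $\tau(\delta)$, using that there are $n=\dim\Sigma+1$ strainer pairs --- is accurate.

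The genuine gap is that you defer precisely that work to the references, and the one concrete tool you offer for it is stated backwards. For $\mathrm{curv}\,\Sigma\geq 1$ the comparison for $f=\cos\dist(p,\cdot)$ is $f''\geq-f$ along unit-speed geodesics in the barrier sense (equivalently, $1-\cos\dist_p$ is $(\cos\dist_p)$-concave); the inequality $\mathrm{Hess}\,f\leq-f\,g$ that you invoke is the one that follows from an \emph{upper} curvature bound, and in an Alexandrov space there is no Hessian to apply in any case, only such barrier statements. More importantly, no one-sided differential inequality, in either direction, can by itself pin $\sum_i\cos^{2}\dist(A_i,x)$ to $1\pm\tau(\delta)$ for every $x$: in \cite{OSY} and \cite{BGP} the estimates you label (i)--(iii) are obtained from Toponogov/angle comparison --- the spherical law of cosines applied to comparison triangles on $x$, $a_i$, $b_j$ --- combined with the global strainer relations and the fact that an $(n-1)$--dimensional Alexandrov space with curvature $\geq 1$ admits no point with $n$ mutually almost-orthogonal, almost-antipodal pairs of directions; that strainer-number/dimension argument is where $n=\dim\Sigma+1$ actually enters and is also the source of your (ii) and (iii). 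As written, then, the proposal is a correct roadmap of the cited proof, but its crucial quantitative step is not established, and the substitute argument sketched for it would fail.
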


\begin{remark}
The description of $\tilde{\Psi}:\mathbb{R}^{n}\longrightarrow S^{n-1}$ in 
\cite{BGP} is explicit but is geometric rather than via a formula. Combining
the proof in \cite{BGP} with a limiting argument, one can see that the map $%
\Psi $ can be given by 
\begin{equation*}
\Psi (x)=\left( \sum \cos ^{2}\left( \mathrm{dist}\left( A_{i},x\right)
\right) \right) ^{-1/2}\left( \cos \left( \mathrm{dist}\left( A_{1},x\right)
\right) ,\ldots ,\cos \left( \mathrm{dist}\left( A_{n},x\right) \right)
\right) .
\end{equation*}
\end{remark}

Next we state a powerful lemma showing that for a $(1,\delta ,r)$--strained
neighborhood, angle and comparison angle almost coincide for geodesic hinges
with one side in the neighborhood and the other reaching a strainer.

\begin{lemma}
(\cite{BGP}, Lemma $5.6$) Let $B\subset X$ be $\left( 1,\delta ,r\right) $%
--strained by $(y_{1},y_{2}).$ For any $x,z\in B,$%
\begin{equation*}
\left\vert \tilde{\sphericalangle}\left( y_{1},x,z\right) +\tilde{%
\sphericalangle}\left( y_{2},x,z\right) -\pi \right\vert <\tau \left( \delta
,\mathrm{dist}\left( x,z\right) |r\right) .
\end{equation*}%
In particular, for $i=1,2$, 
\begin{equation*}
\left\vert \sphericalangle \left( y_{i},x,z\right) -\tilde{\sphericalangle}%
\left( y_{i},x,z\right) \right\vert <\tau \left( \delta ,\mathrm{dist}\left(
x,z\right) |r\right) .
\end{equation*}
\end{lemma}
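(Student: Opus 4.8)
The plan is to show that both $\sphericalangle(y_{1},x,z)+\sphericalangle(y_{2},x,z)$ and $\tilde{\sphericalangle}(y_{1},x,z)+\tilde{\sphericalangle}(y_{2},x,z)$ lie within $\tau(\delta ,\dist(x,z)|r)$ of $\pi$; the individual estimates then fall out of a short defect count, since $\sphericalangle(y_{i},x,z)-\tilde{\sphericalangle}(y_{i},x,z)\ge 0$ for each $i$ by the hinge version of Toponogov's theorem, while the sum of these two nonnegative defects is the difference of the two sums above. Write $t=\dist(x,z)$. The one observation that does real work is that, since $B$ is $(1,\delta ,r)$--strained by $(y_{1},y_{2})$, so is the point $z$; hence the argument can be run symmetrically at $x$ and at $z$.

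First I would bound the honest angles. Because $\mathrm{curv}\,X\ge k$, the space of directions $\Sigma _{x}$ is a compact Alexandrov space with $\mathrm{curv}\,\Sigma _{x}\ge 1$, and in such a space every triple of points has perimeter $\le 2\pi$ \cite{BGP}. Applying this to $\uparrow _{x}^{y_{1}},\uparrow _{x}^{y_{2}},\uparrow _{x}^{z}$ and using $\sphericalangle(y_{1},x,y_{2})\ge \tilde{\sphericalangle}(y_{1},x,y_{2})>\pi -\delta$ gives $\sphericalangle(y_{1},x,z)+\sphericalangle(y_{2},x,z)\le 2\pi -\sphericalangle(y_{1},x,y_{2})<\pi +\delta$, and the same computation in $\Sigma _{z}$ gives $\sphericalangle(y_{1},z,x)+\sphericalangle(y_{2},z,x)<\pi +\delta$. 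Since $\tilde{\sphericalangle}\le \sphericalangle$ on every hinge, this already delivers $\tilde{\sphericalangle}(y_{1},x,z)+\tilde{\sphericalangle}(y_{2},x,z)<\pi +\delta$ (one half of the first assertion) and, writing $S':=\tilde{\sphericalangle}(y_{1},z,x)+\tilde{\sphericalangle}(y_{2},z,x)$, also $S'<\pi +\delta$.

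The heart of the matter is the matching lower bound for $S:=\tilde{\sphericalangle}(y_{1},x,z)+\tilde{\sphericalangle}(y_{2},x,z)$. For this I would pass to the complete simply connected surface of curvature $k$ and form the two comparison triangles for the triples $(y_{1},x,z)$ and $(y_{2},x,z)$ --- these exist because when $k=1$ the bound $\mathrm{curv}\,X\ge 1$ forces every triple to have perimeter $\le 2\pi$. In each comparison triangle the side of length $t$ sits opposite a vertex lying at distance $>r$ from the other two (the strainer condition used at $x$ \emph{and} at $z$), so that triangle's area is $\le \tau(t)$ and its angle at the vertex coming from $y_{i}$ is $\le \tau(t|r)$. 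Adding the two Gauss--Bonnet identities ``angle sum $=\pi +k\cdot(\text{area})$'' and cancelling the two small vertex angles yields $|S+S'-2\pi |\le \tau(t|r)$; with $S'<\pi +\delta$ this gives $S\ge \pi -\delta -\tau(t|r)=\pi -\tau(\delta ,t|r)$. Together with $S\le \sphericalangle(y_{1},x,z)+\sphericalangle(y_{2},x,z)<\pi +\delta$ this is the first inequality of the lemma, and then $0\le \sphericalangle(y_{i},x,z)-\tilde{\sphericalangle}(y_{i},x,z)\le \big(\sphericalangle(y_{1},x,z)+\sphericalangle(y_{2},x,z)\big)-S<(\pi +\delta)-(\pi -\tau(\delta ,t|r))$ gives the ``in particular''.

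The step I expect to be the obstacle is the lower bound for $S$: one must make the area and vertex--angle estimates for the comparison triangles genuinely uniform in the positions of $y_{1},y_{2}$ (possibly far apart, possibly nearly antipodal to $x$ or $z$), which is exactly where the strainer radius $r$ and the smallness of $t$ get used, and one must remember to invoke the strainer condition at $z$, not only at $x$ --- the whole estimate rests on playing the angle sum at $x$ against the angle sum at $z$. Everything else is bookkeeping of $\tau$'s on top of three standard Alexandrov facts: the perimeter bound in curvature $\ge 1$, Toponogov's hinge comparison, and Gauss--Bonnet in the model surface.
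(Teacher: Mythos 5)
The paper does not actually prove this lemma --- it is quoted from \cite{BGP} (Lemma 5.6) --- so the comparison is with the standard argument, and your architecture is exactly that argument: bound the angle sums at $x$ \emph{and} at $z$ above by $\pi+\delta$ via the strainer, bound $S+S'$ below by $2\pi-\tau\left(\delta,t|r\right)$ via the two thin comparison triangles, play the two against each other, and get the ``in particular'' from $\sphericalangle\geq\tilde{\sphericalangle}$ plus the upper bound on the honest angle sum. For $k\leq 0$ your proof is complete as written.

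The gap is in the step you yourself flagged, and only in the case $k=1$: the two separate claims that the comparison triangle on $(y_{i},x,z)$ has apex angle $\leq\tau\left(t|r\right)$ and area $\leq\tau\left(t\right)$ do not follow from $\dist(y_{i},x),\dist(y_{i},z)>r$, and they fail when the legs are close to $\pi$. For instance, a spherical triangle with base $t$ and legs $\pi-t$ has apex angle about $1$ radian and area about $2\pi/3$, uniformly in $t$; and such legs are not excluded by the hypotheses --- in the round sphere, take $y_{2}$ with $\dist(x,y_{2})=q>r$ and let $y_{1}$ be the extension of the segment $y_{2}x'$ ($x'$ the antipode of $x$) beyond $x'$ by $t$: then $\tilde{\sphericalangle}\left(y_{1},x,y_{2}\right)=\pi$, so $(y_{1},y_{2})$ is a $(1,\delta,r)$--strainer, while $\dist(y_{1},x)=\pi-t$. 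What is true, and is all your Gauss--Bonnet bookkeeping actually needs, is the combined estimate $\alpha_{i}-k\cdot\mathrm{Area}_{i}\leq\tau\left(t|r\right)$, i.e.\ that a model triangle with base $t$ and legs in $[r,\pi]$ has base--angle sum $\geq\pi-\tau\left(t|r\right)$: for an isosceles spherical triangle with legs $s$ one computes $\cos\beta=\cot s\,\tan\left(t/2\right)$, so the base--angle sum is roughly $\pi-t\cot s\geq\pi-t\cot r$ for $s\leq\pi/2$ and exceeds $\pi$ for $s\geq\pi/2$ (the large apex angle is offset by large area), and the general case $\left\vert a-b\right\vert\leq t$ is similar. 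Replace your two separate claims by this single inequality and the proof closes; note also that only the lower half of your assertion $\left\vert S+S'-2\pi\right\vert\leq\tau\left(t|r\right)$ is needed or justified --- the upper bound on $S+S'$ comes from the strainer estimates you already have, not from the comparison triangles.
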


\begin{corollary}
\label{Angle continuity}Let $B\subset X$ be $\left( 1,\delta ,r\right) $%
--strained by $\left( a,b\right) $. Let $\left\{ X^{\alpha }\right\}
_{\alpha =1}^{\infty }$ be a sequence of Alexandrov spaces with $\mathrm{curv%
}X^{\alpha }\geq k$ such that $X^{\alpha }\longrightarrow X.$ For $x,z\in B$%
, suppose that $a^{\alpha },b^{\alpha },x^{\alpha },z^{\alpha }\in X^{\alpha
}$ converge to $a,b,x,$ and $z,$ respectively. Then 
\begin{equation*}
\left\vert \sphericalangle \left( a^{\alpha },x^{\alpha },z^{\alpha }\right)
-\sphericalangle \left( a,x,z\right) \right\vert <\tau \left( \delta ,\frac{1%
}{\alpha },\mathrm{dist}\left( x,z\right) \text{ }|\text{ }r\right) .
\end{equation*}
\end{corollary}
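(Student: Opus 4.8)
The plan is to deduce the estimate from the preceding Lemma applied twice — once in $X$ and once in each $X^{\alpha}$ — together with the elementary fact that mutual distances of converging points converge under Gromov--Hausdorff convergence, so that comparison angles vary continuously as long as no degeneracy occurs. Applying the preceding Lemma inside $X$ to the $(1,\delta,r)$--strained ball $B$ and the points $x,z\in B$ immediately gives
\[
\left\vert \sphericalangle\left(a,x,z\right)-\tilde{\sphericalangle}\left(a,x,z\right)\right\vert <\tau\left(\delta,\mathrm{dist}\left(x,z\right)\,\vert\,r\right),
\]
so the task reduces to proving the analogous bound for $\sphericalangle(a^{\alpha},x^{\alpha},z^{\alpha})$ in $X^{\alpha}$ and then comparing the two comparison angles. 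The comparison-angle comparison is routine: since $\mathrm{dist}(a^{\alpha},x^{\alpha})\to\mathrm{dist}(a,x)$, $\mathrm{dist}(a^{\alpha},z^{\alpha})\to\mathrm{dist}(a,z)$ and $\mathrm{dist}(x^{\alpha},z^{\alpha})\to\mathrm{dist}(x,z)$, and the curvature-$k$ comparison angle is a continuous function of these three lengths (there is no degeneracy here because $\mathrm{dist}(a,x)>r>0$), one obtains $\vert\tilde{\sphericalangle}(a^{\alpha},x^{\alpha},z^{\alpha})-\tilde{\sphericalangle}(a,x,z)\vert<\tau(1/\alpha\,\vert\,\mathrm{dist}(x,z))$.

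The substantive step is transferring the strainer to $X^{\alpha}$. Writing $B=B(p,s)$, I would fix $s'$ with $\max\{\mathrm{dist}(p,x),\mathrm{dist}(p,z)\}<s'<s$ and choose $p^{\alpha}\to p$. For $\alpha$ large, every $q^{\alpha}\in B(p^{\alpha},s')$ is close to some $q\in B(p,s)=B$; the inequalities certifying that $q$ is $(1,\delta,r)$--strained by $(a,b)$ involve only the three mutual distances among $a,b,q$, all bounded below by $r$, so by Gromov--Hausdorff convergence they persist for $q^{\alpha}$ and $(a^{\alpha},b^{\alpha})$ up to an error $\tau(1/\alpha)$, with $\mathrm{dist}(\{a^{\alpha},b^{\alpha}\},q^{\alpha})>r-\tau(1/\alpha)$. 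Hence $B(p^{\alpha},s')$ is $(1,\delta+\tau(1/\alpha),r-\tau(1/\alpha))$--strained by $(a^{\alpha},b^{\alpha})$, and it contains $x^{\alpha}$ and $z^{\alpha}$ once $\alpha$ is large since $\mathrm{dist}(p^{\alpha},x^{\alpha})\to\mathrm{dist}(p,x)<s'$ and likewise for $z$. Applying the preceding Lemma inside $X^{\alpha}$ then yields
\[
\left\vert \sphericalangle\left(a^{\alpha},x^{\alpha},z^{\alpha}\right)-\tilde{\sphericalangle}\left(a^{\alpha},x^{\alpha},z^{\alpha}\right)\right\vert <\tau\left(\delta+\tau(1/\alpha),\,\mathrm{dist}\left(x^{\alpha},z^{\alpha}\right)\,\vert\,r-\tau(1/\alpha)\right).
\]

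Finally I would combine the three displayed bounds by the triangle inequality. The only care needed there is bookkeeping of nested $\tau$'s: using $\mathrm{dist}(x^{\alpha},z^{\alpha})\le\mathrm{dist}(x,z)+\tau(1/\alpha)$, the bound $r-\tau(1/\alpha)\ge r/2$ for $\alpha$ large, and monotonicity of $\tau$ in each of its slots, all the $\tau(1/\alpha)$-type errors — which depend only on $1/\alpha$ with $\mathrm{dist}(x,z)$ held fixed — fold into a single term $\tau(1/\alpha\,\vert\,\mathrm{dist}(x,z))$, giving the asserted estimate. I expect the main obstacle to be precisely this transfer step: verifying that the $(1,\delta,r)$--strained property is stable under Gromov--Hausdorff convergence with controlled loss in $\delta$ and $r$, and that the resulting strained ball in $X^{\alpha}$ can be arranged to contain the prescribed points $x^{\alpha},z^{\alpha}$ (this is the reason one shrinks the radius $s$ to a slightly smaller $s'$); everything else is continuity of distances and $\tau$-arithmetic.
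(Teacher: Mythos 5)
Your proposal is correct and follows essentially the same route as the paper's (very terse) proof: convergence of comparison angles under Gromov--Hausdorff convergence combined with the preceding lemma applied in both $X$ and $X^{\alpha}$, with your transfer of the strainer to $X^{\alpha}$ and the $\tau$-bookkeeping simply making explicit what the paper leaves implicit.
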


\begin{proof}
The convergence $X^{\alpha }\longrightarrow X$ implies that%
\begin{equation*}
\left\vert \tilde{\sphericalangle}\left( a^{\alpha },x^{\alpha },z^{\alpha
}\right) -\tilde{\sphericalangle}\left( a,x,z\right) \right\vert <\tau
\left( \frac{1}{\alpha }\text{ }|\text{ }\mathrm{dist}\left( x,z\right)
,r\right) .
\end{equation*}%
Combined with the previous lemma,%
\begin{eqnarray*}
\left\vert \sphericalangle \left( a^{\alpha },x^{\alpha },z^{\alpha }\right)
-\sphericalangle \left( a,x,z\right) \right\vert &\leq &\left\vert
\sphericalangle \left( a^{\alpha },x^{\alpha },z^{\alpha }\right) -\tilde{%
\sphericalangle}\left( a^{\alpha },x^{\alpha },z^{\alpha }\right)
\right\vert + \\
&&\left\vert \tilde{\sphericalangle}\left( a^{\alpha },x^{\alpha },z^{\alpha
}\right) -\tilde{\sphericalangle}\left( a,x,z\right) \right\vert +\left\vert 
\tilde{\sphericalangle}\left( a,x,z\right) -\sphericalangle \left(
a,x,z\right) \right\vert \\
&\leq &2\tau \left( \delta ,\frac{1}{\alpha },\mathrm{dist}\left( x,z\right)
|r\right) +\tau \left( \frac{1}{\alpha }\text{ }|\text{ }\mathrm{dist}\left(
x,z\right) ,r\right) \\
&=&\tau \left( \delta ,\frac{1}{\alpha },\mathrm{dist}\left( x,z\right) 
\text{ }|\text{ }r\right) .
\end{eqnarray*}
\end{proof}

\begin{lemma}
\label{angle convergence}Let $B\subset X$ be $(n,\delta ,r)$--strained by $%
\left\{ \left( a_{i},b_{i}\right) \right\} _{i=1}^{n}$. Let $\left\{
X^{\alpha }\right\} _{\alpha =1}^{\infty }$ have $\mathrm{curv}X^{\alpha
}\geq k,$ and suppose that $X_{\alpha }\longrightarrow $ $X$. Let $\left\{
\left( \gamma _{1,\alpha },\gamma _{2,\alpha }\right) \right\} _{\alpha
=1}^{\infty }$ be a sequence of geodesic hinges in the $X^{\alpha }$ that
converge to a geodesic hinge $\left( \gamma _{1},\gamma _{2}\right) $ with
vertex in $B.$ Then 
\begin{equation*}
\left\vert \sphericalangle \left( \gamma _{1,\alpha }^{\prime }\left(
0\right) ,\gamma _{2,\alpha }^{\prime }\left( 0\right) \right)
-\sphericalangle \left( \gamma _{1}^{\prime }\left( 0\right) ,\gamma
_{2}^{\prime }\left( 0\right) \right) \right\vert <\tau \left( \delta
,1/\alpha \text{ }|\text{ }l\left( \gamma _{1}\right) ,l\left( \gamma
_{2}\right) ,r\right) ,
\end{equation*}%
where $l\left( \gamma _{i}\right) $ is the length of $\gamma _{i}.$
\end{lemma}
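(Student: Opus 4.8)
The plan is to reduce the statement to Corollary \ref{Angle continuity} by writing the angle between two directions at a strained point as one fixed, uniformly continuous function of the ``coordinate angles'' to the strainers $a_i$. I first record the elementary reduction that makes this work: the angle of a geodesic hinge depends only on the germs of its sides, so if $x$, $x^\alpha$ denote the hinge vertices and $z_j\in\gamma_j$, $z_j^\alpha\in\gamma_{j,\alpha}$ are any points, then $\sphericalangle(\gamma_1'(0),\gamma_2'(0))=\sphericalangle(z_1,x,z_2)$ and $\sphericalangle(\gamma_{1,\alpha}'(0),\gamma_{2,\alpha}'(0))=\sphericalangle(z_1^\alpha,x^\alpha,z_2^\alpha)$, with $xz_j$ taken to be the sub-segment of $\gamma_j$. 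In particular the $z_j$ may be chosen as close to $x$ as desired, which is needed so that $z_j\in B$ and Corollary \ref{Angle continuity} is applicable.

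Next I set up the coordinate description at $x$. Since $B$ is $(n,\delta,r)$--strained by $\{(a_i,b_i)\}$, the space of directions $\Sigma_x$ is compact with $\mathrm{curv}\,\Sigma_x\ge 1$ and is globally $(n,\tau(\delta))$--strained by $\{(\Uparrow_x^{a_i},\Uparrow_x^{b_i})\}$; moreover all directions in $\Uparrow_x^{a_i}$ lie within $\tau(\delta)$ of each other, so $\theta_i(\gamma_j'(0)):=\sphericalangle(a_i,x,z_j)$ agrees, up to $\tau(\delta)$, with $\dist_{\Sigma_x}(\Uparrow_x^{a_i},\gamma_j'(0))$ independently of the segment $xa_i$. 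Applying Theorem \ref{BGP--SOY} to $\Sigma_x$, together with the explicit formula for $\Psi$ in the following Remark, gives
\begin{equation*}
\bigl|\sphericalangle(\gamma_1'(0),\gamma_2'(0))-\dist_{S^{n-1}}(\Psi_x(\gamma_1'(0)),\Psi_x(\gamma_2'(0)))\bigr|<\tau(\delta),
\end{equation*}
and the right-hand spherical distance is a fixed function $F$ of the $2n$ numbers $\theta_i(\gamma_1'(0)),\theta_i(\gamma_2'(0))$, namely $\arccos$ of $\sum_i\cos\theta_i(\gamma_1'(0))\cos\theta_i(\gamma_2'(0))$ divided by the two factors $(\sum_i\cos^2\theta_i(\gamma_j'(0)))^{1/2}$. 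The strained hypothesis keeps both normalizing factors in $(1-\tau(\delta),1+\tau(\delta))$, so on the range that occurs $F$ is uniformly continuous with modulus depending only on $n$. The identical discussion applies in each $X^\alpha$: convergence of comparison angles makes $x^\alpha$ $(n,\tau(\delta,1/\alpha),r/2)$--strained by $\{(a_i^\alpha,b_i^\alpha)\}$ for large $\alpha$, so $\sphericalangle(\gamma_{1,\alpha}'(0),\gamma_{2,\alpha}'(0))$ lies within $\tau(\delta,1/\alpha)$ of $F$ evaluated at $\theta_i^\alpha(\gamma_{j,\alpha}'(0)):=\sphericalangle(a_i^\alpha,x^\alpha,z_j^\alpha)$.

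It remains to compare the inputs of $F$. Choosing $a_i^\alpha\to a_i$, $z_j^\alpha\to z_j$, $x^\alpha\to x$, Corollary \ref{Angle continuity} applied with the strainer pair $(a_i,b_i)$ yields
\begin{equation*}
\bigl|\theta_i^\alpha(\gamma_{j,\alpha}'(0))-\theta_i(\gamma_j'(0))\bigr|<\tau(\delta,\dist(x,z_j),\tau(1/\alpha\,|\,\dist(x,z_j))\,|\,r)+\tau(\delta),
\end{equation*}
the extra $\tau(\delta)$ coming from the ambiguity in the segments to the $a_i$. Since the left side does not depend on the chosen points, $\dist(x,z_j)$ may be taken arbitrarily small and one infimizes the bound over $\dist(x,z_j)\le\min(\mathrm{len}(\gamma_1),\mathrm{len}(\gamma_2))$; feeding the result through the modulus of $F$ and combining with the two estimates of the previous paragraph produces the claimed bound $\tau(\delta,\tau(1/\alpha\,|\,\mathrm{len}(\gamma_1),\mathrm{len}(\gamma_2))\,|\,r)$ on $\bigl|\sphericalangle(\gamma_{1,\alpha}'(0),\gamma_{2,\alpha}'(0))-\sphericalangle(\gamma_1'(0),\gamma_2'(0))\bigr|$.

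I expect the following to be the fiddly points rather than genuinely hard: the two standard facts used above (a pointwise strainer at $x$ globalizes to a strainer of $\Sigma_x$; segments to a strainer point are $\tau(\delta)$--unique in direction); the fact that near $\sphericalangle(\gamma_1'(0),\gamma_2'(0))\in\{0,\pi\}$ the function $F$ is only H\"older continuous of exponent $1/2$ rather than Lipschitz, so the final modulus is a composition of $\tau$'s and hence still a $\tau$; and, most of all, the $\tau$--bookkeeping: checking that ``for every small $\epsilon$ the error is below $\tau(\delta,\epsilon,\tau(1/\alpha\,|\,\epsilon)\,|\,r)$'' can be repackaged, via an infimum over $\epsilon\le\min(\mathrm{len}(\gamma_1),\mathrm{len}(\gamma_2))$, into a single function tending to $\tau(\delta)$ as $\alpha\to\infty$ with the lengths held fixed. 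That repackaging, not any new geometry, is the real content beyond Corollary \ref{Angle continuity}.
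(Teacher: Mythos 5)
Your proposal is correct and follows essentially the same route as the paper: restrict each hinge side to a short subsegment, use Corollary \ref{Angle continuity} (with the strainer pairs $(a_i,b_i)$) to control the angles between the hinge directions and the straining directions, then conclude via Theorem \ref{BGP--SOY} that closeness of these ``coordinate angles'' forces closeness of the hinge angles, letting $\dist(x,z_j)$ shrink in the final bookkeeping. The paper's proof is just a terser version of this, leaving implicit the points you flag (near-uniqueness of directions to strainer points, the explicit almost-isometry $\Psi$, and the $\tau$--repackaging).
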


\begin{remark}
Note that without the strainer, $\lim \inf_{\alpha \rightarrow \infty
}\sphericalangle \left( \gamma _{1,\alpha }^{\prime }\left( 0\right) ,\gamma
_{2,\alpha }^{\prime }\left( 0\right) \right) \geq \sphericalangle \left(
\gamma _{1}^{\prime }\left( 0\right) ,\gamma _{2}^{\prime }\left( 0\right)
\right) $ \cite{GrovPet2, BGP}.
\end{remark}

\begin{proof}
Apply the previous corollary with $x^{\alpha }=\gamma _{1,\alpha }\left(
0\right) ,$ $z^{\alpha }=\gamma _{1,\alpha }\left( \varepsilon \right) ,$ $%
x^{\alpha }\rightarrow x,$ and $z^{\alpha }\rightarrow z$ to conclude 
\begin{equation*}
\left\vert \sphericalangle (\Uparrow _{x^{\alpha }}^{a_{i}^{\alpha }},\gamma
_{1,\alpha }^{\prime }\left( 0\right) )-\sphericalangle (\Uparrow
_{x}^{a_{i}},\gamma _{1}^{\prime }\left( 0\right) )\right\vert <\tau \left(
\delta ,\frac{1}{\alpha },\mathrm{dist}\left( x,z\right) \text{ }|\text{ }%
r\right) .
\end{equation*}%
Similar reasoning with $x^{\alpha }=\gamma _{2,\alpha }\left( 0\right) ,$ $%
z^{\alpha }=\gamma _{2,\alpha }\left( \varepsilon \right) ,$ $x=\lim_{\alpha
\rightarrow \infty }x^{\alpha },$ and $z=\lim_{\alpha \rightarrow \infty
}z^{\alpha }$ gives 
\begin{equation*}
\left\vert \sphericalangle (\Uparrow _{x^{\alpha }}^{a_{i}^{\alpha }},\gamma
_{2,\alpha }^{\prime }\left( 0\right) )-\sphericalangle (\Uparrow
_{x}^{a_{i}},\gamma _{2}^{\prime }\left( 0\right) )\right\vert <\tau \left(
\delta ,\frac{1}{\alpha },\mathrm{dist}\left( x,z\right) \text{ }|\text{ }%
r\right) .
\end{equation*}

Since $\mathrm{dist}\left( x,z\right) $ may be as small as we please, the
result then follows from Theorem \ref{BGP--SOY}.
\end{proof}

%
%

\section{Purse Stability\label{purse stab section}}

We start this section with a review of Gromoll groups. We then state Theorem %
\ref{Gromoll} and show that it implies Theorem \ref{Purse Stab}. The bulk of
this section is devoted to the proof of Theorem \ref{Gromoll}.

Recall that a twisted $n$--sphere, $\Sigma ^{n},$ is a compact smooth
manifold that admits a Morse function $f$ with exactly two critical points.
The gradient flow of $f$ allows us to decompose $\Sigma ^{n}$ as the union
of two $n$--disks. In \cite{KerMil}, Kervaire and Milnor showed that the
twisted $n$--spheres form a group $\Gamma ^{n}$ under connected sum. Gromoll
showed that there is a filtration 
\begin{equation*}
\left\{ e\right\} \subset \Gamma _{n-1}^{n}\subset \cdots \subset \Gamma
_{1}^{n}=\Gamma ^{n}
\end{equation*}%
by subgroups, which are now called Gromoll groups \cite{Grom}. Rather than
using the definition of the $\Gamma _{q}^{n}$s from \cite{Grom}, we use the
equivalent notion from Theorem D in \cite{GrovWilh2}.

\begin{definition}
Let 
\begin{equation*}
f:S^{q-1}\times S^{n-q}\longrightarrow S^{q-1}\times S^{n-q}
\end{equation*}%
be a diffeomorphism that satisfies%
\begin{equation}
p_{q-1}\circ f=p_{q-1},  \label{gluing eqn}
\end{equation}%
where 
\begin{equation*}
p_{q-1}:S^{q-1}\times S^{n-q}\longrightarrow S^{q-1}
\end{equation*}%
is projection to the first factor. Then $\Gamma _{q}^{n}$ consists of those
smooth manifolds that are diffeomorphic to 
\begin{equation}
D^{q}\times S^{n-q}\cup _{f}S^{q-1}\times D^{n-q+1}.  \label{handle body}
\end{equation}
\end{definition}

\begin{theorem}
\label{Gromoll}Let $\left\{ M^{\alpha }\right\} _{\alpha =1}^{\infty }$ be a
sequence of closed, Riemannian $n$--manifolds with 
\begin{equation*}
\mathrm{sec\,}M^{\alpha }\geq k
\end{equation*}%
so that 
\begin{equation*}
M_{\alpha }\longrightarrow P_{k,r}^{n}
\end{equation*}%
in the Gromov-Hausdorff topology. Then for $\alpha $ sufficiently large, $%
M_{\alpha }\in \Gamma _{n-1}^{n}.$
\end{theorem}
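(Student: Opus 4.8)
The plan is to produce, for all sufficiently large $\alpha$, a decomposition of $M^{\alpha}$ of the form \eqref{handle body} with $q=n-1$. The geometric picture to exploit is that the purse $P_{k,r}^{n}$ is the double of a half-disk $\{\mathcal{HD}_{k}^{n}(r)\}^{+}$ along its flat boundary hyperplane; equivalently $P_{k,r}^{n}=\mathcal{D}_{k}^{n}(r)/\{v\sim R(v) \text{ on the boundary}\}$ where $R$ is reflection in a totally geodesic hyperplane $H$. The key feature is that $P_{k,r}^{n}$ has exactly two topologically singular strata: the ``fold'' set, which is an $(n-1)$-disk $\Delta$ coming from $H\cap\mathcal{D}_{k}^{n}(r)$ where the space looks like $\mathbb{R}^{n-1}\times(\text{cone on a point})$, i.e.\ a manifold-with-boundary point set, and its interior boundary. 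Away from a neighborhood of $\Delta$, the purse is a Riemannian manifold (indeed two open disks), while a neighborhood of $\Delta$ fibers over $\Delta\cong D^{n-1}$ with fiber a half-open interval. First I would fix a distance-like function whose zero set is $\Delta$: a natural choice is $g=\mathrm{dist}(N,\cdot)$ for a suitable pair of points, or more robustly the difference $h_{k}\circ\mathrm{dist}(q,\cdot)-h_{k}\circ\mathrm{dist}(q',\cdot)$ for $q,q'$ the two ``tips'' of the purse, mimicking the construction of $f_0$ in Section~4. On $P_{k,r}^{n}$ this function is regular away from $\Delta$ and its gradient is normal to $\Delta$ along $\Delta$.

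Next I would transport this to the $M^{\alpha}$. Using Theorem~\ref{covers converge} (or directly, since the $M^{\alpha}$ need not be simply connected here but converge to the simply connected $P_{k,r}^{n}$, so the $M^{\alpha}$ are eventually simply connected), choose points $q^{\alpha},q'^{\alpha}\in M^{\alpha}$ converging to the tips, and form the smoothed function $g^{\alpha}\colon M^{\alpha}\to\mathbb{R}$ exactly as $f_{0,d}^{\alpha}$ was built in \eqref{bar--f--alpha}, averaging $h_{k}\circ\mathrm{dist}$ over small balls. By Proposition~\ref{f_i C^1} this is $C^{1}$ with $|\nabla g^{\alpha}|\le 2$. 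Then, running the strainer machinery of Section~3 (Lemmas~\ref{angle convergence}, \ref{Directional derivative lower bound} and their addenda, applied at every point of $P_{k,r}^{n}$ lying off a fixed neighborhood $B(\Delta,\rho_0)$ of the fold), I would show $g^{\alpha}$ has no critical points on $M^{\alpha}\setminus B(\Delta^{\alpha},\rho_0)$ for $\alpha$ large, where $\Delta^{\alpha}$ is a subset converging to $\Delta$. This is the analogue of Proposition~\ref{no crtical}. The sublevel set $\{g^{\alpha}\ge c\}$ for appropriate $c$ is then, by the isotopy lemma of critical point theory, diffeomorphic to a disk bundle over a neighborhood of one tip; and the same on the other side. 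The interesting region is $(g^{\alpha})^{-1}([-\rho_0,\rho_0])$ near the fold.

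To understand the fold region I would argue that for $\alpha$ large the set $N^{\alpha}:=(g^{\alpha})^{-1}(c)$ for a regular value $c$ close to the fold level is a codimension-one submanifold that, when we run the gradient flow of $g^{\alpha}$, sweeps out a collar; and crucially that this $N^{\alpha}$ is diffeomorphic to $S^{q-1}\times S^{n-q}$ with $q=n-1$, i.e.\ $S^{n-2}\times S^{1}$, with the projection to the $S^{n-2}$ factor matching the fold structure --- because the link of the fold $\Delta$ in $P_{k,r}^{n}$ is (a point, doubled) over $\partial\Delta\cong S^{n-2}$, so the ``unit normal sphere bundle'' of the fold region retracts onto $S^{n-2}\times S^{1}$. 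Here I would invoke Perelman's Stability Theorem: the preimage of a small neighborhood of $\Delta$ under an approximation $M^{\alpha}\to P_{k,r}^{n}$ is homeomorphic, hence (after a small perturbation and using that $g^{\alpha}$ has controlled gradient) realized as a smooth tubular-type neighborhood. Piecing the three regions together --- two disk bundles $D^{q}\times S^{n-q}$ over the tips (which are disk $\times$ sphere by the no-critical-point argument plus the Morse-theoretic structure at a tip, mirroring the $f_0^{-1}(0)\cong S^{n-1}$ computation in Section~5 but one dimension lower on the base), glued along the fold region $S^{q-1}\times D^{n-q+1}$ --- yields precisely a manifold of the form \eqref{handle body}, so $M^{\alpha}\in\Gamma_{n-1}^{n}$.

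The main obstacle is the last step: controlling the gluing diffeomorphism $f\colon S^{q-1}\times S^{n-q}\to S^{q-1}\times S^{n-q}$ in the fold region, specifically verifying that it respects the projection $p_{q-1}$ (that is, $p_{q-1}\circ f=p_{q-1}$) so that the resulting manifold genuinely lies in $\Gamma_{n-1}^{n}$ rather than merely in $\Gamma_{1}^{n}=\Gamma^{n}$. This requires showing that the $S^{n-2}$-direction along the fold is preserved by the construction; I expect this to follow from the fact that in $P_{k,r}^{n}$ the fold $\Delta$ is totally geodesic and the normal distance function $g$ restricted to the fold region is, up to the $C^{1}$-almost-isometry estimates of Section~3, a Riemannian submersion onto its image over $\Delta$, so the gradient flow of $g^{\alpha}$ together with a straining-function chart for $\Delta^{\alpha}$ splits off the $S^{n-2}=\partial\Delta^{\alpha}$ factor equivariantly. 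Making this precise --- propagating the almost-product structure near the fold through Perelman stability while keeping track of the projection --- is the delicate part, and is presumably where the bulk of Section~6's technical work will go; the no-critical-point estimates away from the fold, by contrast, are routine given Section~4.
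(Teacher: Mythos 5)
Your reading of the purse's geometry is off in a way that breaks the proposed decomposition. Since $H$ is totally geodesic, the double of $\left\{ \mathcal{HD}_{k}^{n}\left( r\right) \right\} ^{+}$ is locally isometric to the model space form at interior points of $\Delta =H\cap \mathcal{D}_{k}^{n}\left( r\right) $; the flat disk is not a fold at all. The metrically singular set is only $\mathcal{S}=\partial \Delta \cong S^{n-2}$, while the crease carrying concentrated curvature is the image of $\partial \mathcal{D}_{k}^{n}\left( r\right) $, an $(n-1)$--disk $C$ with $\partial C=\mathcal{S}$. In particular $P_{k,r}^{n}\setminus \Delta $ is connected (one passes through $C$), not two open disks, and a bisector--type function $h_{k}\circ \mathrm{dist}\left( q,\cdot \right) -h_{k}\circ \mathrm{dist}\left( q^{\prime },\cdot \right) $ with $q,q^{\prime }$ interchanged by the reflection of the purse vanishes on the whole $(n-1)$--sphere $\Delta \cup C$; its nearby regular level sets are $(n-1)$--spheres, not $S^{n-2}\times S^{1}$. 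So the three--piece decomposition you describe (two handle regions at ``tips'' plus a fold collar fibering over $\Delta $) does not exist as stated, and no single real--valued function can have the disk $\Delta $ as a regular level set.

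The deeper gap is the one you yourself defer. A single function $g^{\alpha }$ and its gradient flow can at best present $M^{\alpha }$ as two disks glued along a collar, i.e.\ as a twisted sphere in $\Gamma ^{n}=\Gamma _{1}^{n}$, which is precisely not enough: the content of Theorem \ref{Gromoll} is membership in $\Gamma _{n-1}^{n}$, and the compatibility $p_{q-1}\circ f=p_{q-1}$ is the whole point, not a finishing technicality. The paper obtains it by a different device: the $\mathbb{R}^{n-1}$--valued map $\Psi _{d}^{\alpha }=(f_{1,d}^{\alpha },\ldots ,f_{n-1,d}^{\alpha })$ built from averaged distance differences based at $p_{1},\ldots ,p_{n-1}\in \mathcal{S}$. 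Over $D^{n-1}(0,r-\varepsilon )$ it is a proper submersion, giving the trivial $S^{1}$--bundle $E_{0}^{\alpha }\cong D^{n-1}\times S^{1}$; near $\mathcal{S}$, where only $\left( n-2,\delta ,r\right) $--strainers exist (tangent to $\mathcal{S}$), so that neither your no--critical--point argument nor full Section 4 regularity is available, one instead shows the radial composition $g_{d}^{\alpha }=\mathrm{pr}\circ \Psi _{d}^{\alpha }:E_{1}^{\alpha }\rightarrow S^{n-2}$ is a proper submersion and identifies its fiber as $D^{2}$ (and the bundles as trivial) using Perelman's stability and gluing theorems (Proposition \ref{Param Stabil}). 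Because both handles are trivialized as bundles over pieces of the same base by restrictions of the single map $\Psi _{d}^{\alpha }$, the gluing map automatically commutes with the projection to $S^{n-2}$, which is what places $M^{\alpha }$ in $\Gamma _{n-1}^{n}$. Your proposal contains neither the vector--valued map nor any workable analysis near $\mathcal{S}$, so the essential idea of the proof is missing.
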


It is known that $\Gamma _{n-1}^{n}$ is trivial for all $n.$ Given this
fact, Theorem \ref{Purse Stab} implies Theorem \ref{Gromoll}. To see why $%
\Gamma _{n-1}^{n}$ is trivial, we first point out that $\Gamma ^{n}=\left\{
e\right\} $ for $n=1,2,3,$ \cite{Munk}. So we may assume that $n\geq 4.$
Next notice that a diffeomorphism $f:S^{n-2}\times S^{1}\longrightarrow
S^{n-2}\times S^{1}$ so that $p_{n-2}\circ f=p_{n-2}$ gives rise to an
element of $\pi _{n-2}\left( \mathrm{Diff}_{+}\left( S^{1}\right) \right) .$
If two such diffeomorphisms give the same homotopy class, then the
construction (\ref{handle body}) yields diffeomorphic manifolds (cf. \cite%
{GrovWilh2}). Since the group of orientation preserving diffeomorphisms of
the circle deformation retracts to $SO\left( 2\right) ,$ it follows that for 
$n\geq 4,$ $\Gamma _{n-1}^{n}=\left\{ e\right\} ,$ as desired.

\subsection{The Model Submetry}

View $P_{k,r}^{n}$ as the double of the half disk $\left\{ \frac{1}{2}%
\mathcal{D}_{k}^{n}\left( r\right) \right\} ^{+},$%
\begin{equation*}
P_{k,r}^{n}\equiv \mathrm{Double}\left( \left\{ \frac{1}{2}\mathcal{D}%
_{k}^{n}\left( r\right) \right\} ^{+}\right) ,
\end{equation*}%
and let $\left\{ M^{\alpha }\right\} _{\alpha =1}^{\infty }$ be a sequence
of closed, Riemannian $n$--manifolds with 
\begin{equation*}
\mathrm{sec\,}M^{\alpha }\geq k
\end{equation*}%
and 
\begin{equation*}
\mathrm{dist}_{GH}\left( M_{\alpha },P_{k,r}^{n}\right) <\frac{1}{\alpha }.
\end{equation*}%
Our Model Submetry 
\begin{equation}
\Psi :P_{k,r}^{n}\longrightarrow \mathbb{R}^{n-1}  \label{dfn of psi}
\end{equation}%
is the restriction to either half disk of orthogonal projection to the
totally geodesic hyperplane $H\subset \mathcal{D}_{k}^{n}\left( r\right) $
that defines $P_{k,r}^{n}.$

In this subsection, we describe the Model Submetry in terms of distance
functions on the Purse. This will enable us, in the next subsection, to
approximate $\Psi $ by maps $\Psi _{\alpha }:M_{a}\longrightarrow \mathbb{R}%
^{n-1}$ that inherit much of the regularity of $\Psi .$ The inherited
regularity is established in the paper's final subsection in Corollary \ref%
{cor 6} and Lemma \ref{Pis on disk bundle lemma}. It will allow us to
decompose $M_{a}$ as the union of a trivial $D^{2}$--bundle and a trivial
circle bundle (see the circle and disk bundle lemmas, \ref{circle bundle},%
\ref{Disk Bundle}, below). The proof of Theorem \ref{Gromoll} is completed
by showing that $M_{\alpha }$ is the union of these two bundles glued
together on their common boundary via a diffeomorphism that satisfies
Equation (\ref{gluing eqn}).

To describe $\Psi $ in terms of distance functions we use 
\begin{equation*}
H^{n}\equiv \left\{ \left. (x_{0},x_{1},\cdots ,x_{n})\in \mathbb{R}%
^{n+1}\right\vert -\left( x_{0}\right) ^{2}+\left( x_{1}\right) ^{2}+\cdots
+\left( x_{n}\right) ^{2}=-1,\text{ }x_{0}>0\right\}
\end{equation*}%
as our model for hyperbolic space. We write $\mathcal{S}_{k}^{n}$ for any of 
$H^{n}\subset \mathbb{R}^{n+1},$ $\left\{ e_{0}\right\} \times \mathbb{R}%
^{n}\subset \mathbb{R}^{n+1},$ or $S^{n}\subset \mathbb{R}^{n+1}.$ We denote
the standard basis for $\mathbb{R}^{n+1}$ by $\left\{ e_{0},e_{1},\ldots
,e_{n}\right\} $, and we identify $\mathcal{D}_{k}^{n}\left( r\right) $ with 
\begin{equation*}
\mathcal{D}_{k}^{n}\left( r\right) \equiv \left\{ 
\begin{array}{ll}
\left\{ \left. z\in H^{n}\subset \mathbb{R}^{n+1}\right\vert \mathrm{dist}%
_{H^{n}}\left( e_{0},z\right) \leq r\right\} & \text{if }k=-1 \\ 
\left\{ \left. z\in \left\{ e_{0}\right\} \times \mathbb{R}^{n}\subset 
\mathbb{R}^{n+1}\right\vert \mathrm{dist}_{\mathbb{R}^{n+1}}\left(
e_{0},z\right) \leq r\right\} & \text{if }k=0 \\ 
\left\{ \left. z\in S^{n}\subset \mathbb{R}^{n+1}\right\vert \mathrm{dist}%
_{S^{n}}\left( e_{0},z\right) \leq r\right\} & \text{if }k=1.%
\end{array}%
\right.
\end{equation*}

Set%
\begin{equation*}
p_{0}=e_{0},
\end{equation*}%
and for $i\in \left\{ 1,2,\ldots ,n-1\right\} ,$ set 
\begin{equation}
p_{i}\equiv \left\{ 
\begin{array}{ll}
\cosh (r)e_{0}+\sinh (r)e_{i} & \text{if }k=-1 \\ 
e_{0}+re_{i} & \text{if }k=0 \\ 
\cos (r)e_{0}-\sin (r)e_{i} & \text{if }k=1.%
\end{array}%
\right.  \label{dfn of  p_i dfn}
\end{equation}

We let the totally geodesic hyperplane $H\subset \mathcal{D}_{k}^{n}\left(
r\right) $ that defines $P_{k,r}^{n}$ be the one containing $%
p_{0},p_{1},\ldots ,p_{n-1}.$ We denote the singular subset of $P_{k,r}^{n}$
by $\mathcal{S}^{n-2},$ that is, $\mathcal{S}^{n-2}$ is the copy of the $%
\left( n-2\right) $-sphere which is the boundary of the $(n-1)$--disk $%
\mathcal{D}_{k}^{n}(r)\cap H.$ Thus $\left\{ p_{i}\right\}
_{i=1}^{n-1}\subset $ $\mathcal{S}^{n-2}$.

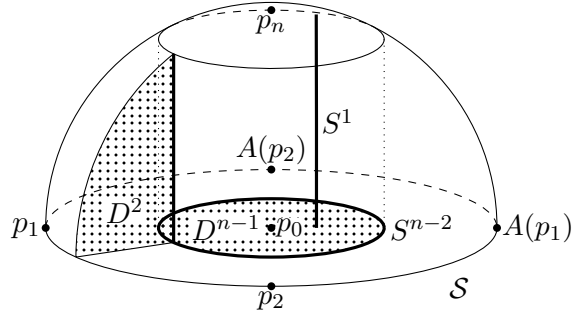
\begin{figure}[ht]
\centering
\begin{tikzpicture}[ dot/.style={fill=black,circle,minimum size=3pt}] 

\def\R{3} 
\def\angEl{15} 
\def\diskloc{210} 
\draw (0,0) (0:\R) arc (0:180:\R);
\DrawLatitudeCircle[\R]{0} 
\DrawLatitudeCirclefilledD[{\R*.5}]{0}
\DrawLatitudeCircle[\R]{60}

\DrawLongitudeCircleportion[\R]{\diskloc} 
\Drawcircleoverdisk[\R]{0}{.2}
\Drawlightcircleoverdisk[\R]{0}{\R/2}
\Drawlightcircleoverdisk[\R]{180}{\R/2}
\draw (2.5,-.8) node{\small $\mathcal S$};
 \draw (2,0) node{\small$S^{n-2}$};
\Drawlabels[\R]{0}
\end{tikzpicture}
\caption{One side of $P^n_{k,r}$ for $n=3$ and $k=0$.}\label{pursefig}
\end{figure}
Since the antipodal map $A:\mathcal{D}_{k}^{n}\left( r\right)
\longrightarrow \mathcal{D}_{k}^{n}\left( r\right) $ commutes with the
reflection $R$ in $H,$ it induces a well-defined involution of $%
A_{P}:P_{k,r}^{n}\longrightarrow P_{k,r}^{n}.$ Note that $A_{P}$ restricts
to the antipodal map of $\mathcal{S}^{n-2}$ and fixes the circle at maximal
distance from $\mathcal{S}^{n-2}.$ For simplified notation, we will write $A$
for the restriction of $A_{P}$ to $\mathcal{S}^{n-2}.$

For $i\in \left\{ 1,2,\ldots ,n-1\right\} ,$ set 
\begin{equation*}
f_{i}(x)\equiv h_{k}\circ \mathrm{dist}\left( A\left( p_{i}\right) ,x\right)
-h_{k}\circ \mathrm{dist}\left( p_{i},x\right)
\end{equation*}%
where $h_{k}:\mathbb{R}\rightarrow \mathbb{R}$ is defined as 
\begin{equation*}
h_{k}(x)\equiv \left\{ 
\begin{array}{ll}
\frac{1}{2\sinh r}\cosh (x) & \text{if }k=-1\vspace*{0.05in} \\ 
\frac{x^{2}}{4r} & \text{if }k=0 \\ 
\frac{1}{2\sin r}\cos (x) & \text{if }k=1.%
\end{array}%
\right.
\end{equation*}%
The functions $\{f_{i}\}_{i=1}^{n-1}$ are then restrictions of $\left(
n-1\right) $--coordinate functions of $\mathbb{R}^{n+1}$ to $\mathcal{D}%
_{k}^{n}\left( r\right) \subset \mathcal{S}_{k}^{n}.$ In particular, the $%
f_{i}$s are the coordinate functions of the Model Submetry $\Psi
:P_{k,r}^{n}\longrightarrow \mathbb{R}^{n-1}$ from (\ref{dfn of psi}), that
is, 
\begin{equation*}
\Psi =\left( f_{1},f_{2},\ldots ,f_{n-1}\right) .
\end{equation*}%
It follows that $\Psi |_{\mathcal{S}^{n-2}}$ is the the inclusion of $%
\mathcal{S}^{n-2}$ into $\mathcal{S}_{k}^{n}\subset \mathbb{R}^{n+1}$.

To construct our decompositions of the $M_{\alpha }$s into $D^{n-1}\times
S^{1}$ and $S^{n-2}\times D^{2},$ we next approximate the Model Submetry by
maps $\Psi _{d}^{\alpha }:M^{\alpha }\longrightarrow \mathbb{R}^{n-1}.$

\subsection{Approximating The Model Submetry}

Let $\left\{ M^{\alpha }\right\} _{\alpha =1}^{\infty }$ be a sequence of
closed, Riemannian $n$--manifolds with 
\begin{equation*}
\mathrm{sec\,}M^{\alpha }\geq k
\end{equation*}%
and 
\begin{equation*}
\mathrm{dist}_{GH}\left( M_{\alpha },P_{k,r}^{n}\right) <\frac{1}{\alpha }.
\end{equation*}

Let $A:M^{\alpha }\longrightarrow M^{\alpha }$ denote any map that is
Gromov-Hausdorff close to $A:P_{k,r}^{n}\longrightarrow P_{k,r}^{n}.$

We define approximations $f_{i,d}^{\alpha }:M^{\alpha }\longrightarrow 
\mathbb{R}$ of the $f_{i}$s by 
\begin{eqnarray*}
f_{i,d}^{\alpha }(x) &=&\frac{1}{\mathrm{vol}\left( B\left( A\left(
p_{i}^{\alpha }\right) ,d\right) \right) }\int_{z\in B\left( A\left(
p_{i}^{\alpha }\right) ,d\right) }h_{k}\circ \mathrm{dist}\left( z,x\right)
\\
&&\hspace*{1.9in}-\frac{1}{\mathrm{vol}\left( B\left( p_{i}^{\alpha
},d\right) \right) }\int_{z\in B\left( p_{i}^{\alpha },d\right) }h_{k}\circ 
\mathrm{dist}\left( z,x\right) .
\end{eqnarray*}

We let $\Psi _{d}^{\alpha }:M^{\alpha }\longrightarrow \mathbb{R}^{n-1}$ be
defined by%
\begin{equation*}
\Psi _{d}^{\alpha }=(f_{1,d}^{\alpha },\dots ,f_{n-1,d}^{\alpha }).
\end{equation*}

\subsection{The Bundle Decomposition\label{bundle sect}}

To prove Theorem \ref{Gromoll} we decompose the $M_{\alpha }$s as the union
of a trivial circle bundle, $D^{n-1}\times S^{1},$ and a trivial disk
bundle, $S^{n-2}\times D^{2},$ which we describe in this subsection.

We identify $\mathbb{R}^{n-1}$ with 
\begin{equation*}
\mathbb{R}^{n-1}\equiv \mathrm{span}\left\{ e_{1},\ldots ,e_{n-1}\right\} .
\end{equation*}%
For small $\varepsilon >0$, we set 
\begin{eqnarray*}
E_{D}\left( r-\varepsilon \right) &\equiv &\left( \Psi \right)
^{-1}(D^{n-1}(0,r-\varepsilon )), \\
E_{D}^{\alpha }\left( r-\varepsilon \right) &\equiv &\left( \Psi
_{d}^{\alpha }\right) ^{-1}(D^{n-1}(0,r-\varepsilon )), \\
E_{A}\left( r-\varepsilon \right) &\equiv &\left( \Psi \right) ^{-1}(%
\overline{A^{n-1}(0,r-\varepsilon ,2r)}),\text{ and} \\
E_{A}^{\alpha }\left( r-\varepsilon \right) &\equiv &\left( \Psi
_{d}^{\alpha }\right) ^{-1}(\overline{A^{n-1}(0,r-\varepsilon ,2r)}),
\end{eqnarray*}%
where $\overline{A^{n-1}(0,r-\varepsilon ,2r)}$ is the closed annulus in $%
\mathbb{R}^{n-1}$ centered at $0$ with inner radius $r-\varepsilon $ and
outer radius $2r,$ and $D^{n-1}(0,r-\varepsilon )$ is the closed ball in $%
\mathbb{R}^{n-1}$ centered at $0$ with radius $r-\varepsilon .$

The next two Lemmas give us the desired bundle decomposition of the $%
M_{\alpha }$s. Hence together they imply Theorem \ref{Gromoll}.

\begin{circlebundlelemma}
\label{circle bundle}For any sufficiently small $\varepsilon >0$, 
\begin{equation*}
\Psi _{d}^{\alpha }:E_{D}^{\alpha }\left( r-\varepsilon \right)
\longrightarrow D^{n-1}(0,r-\varepsilon )
\end{equation*}%
is a trivial $S^{1}$--bundle, provided $\alpha $ is sufficiently large and $%
d $ is sufficiently small.
\end{circlebundlelemma}

Let $\mathrm{pr}:\overline{A^{n-1}(0,r-\varepsilon ,2r)}\rightarrow \partial
\left( D^{n-1}(0,r-\varepsilon )\right) =S^{n-2}$ be radial projection and
set 
\begin{eqnarray*}
g\equiv &&\mathrm{pr}\circ \Psi :E_{A}\left( r-\varepsilon \right)
\rightarrow \partial \left( D^{n-1}(0,r-\varepsilon )\right) \\
g_{d}^{\alpha }\equiv &&\mathrm{pr}\circ \Psi _{d}^{\alpha }:E_{A}^{\alpha
}\left( r-\varepsilon \right) \rightarrow \partial \left(
D^{n-1}(0,r-\varepsilon )\right) .
\end{eqnarray*}

\begin{diskbundlelemma}
\label{Disk Bundle}There is an $\varepsilon >0$ so that 
\begin{equation*}
g_{d}^{\alpha }:E_{A}^{\alpha }\left( r-\varepsilon \right) \longrightarrow
\partial \left( D^{n-1}(0,r-\varepsilon )\right)
\end{equation*}%
is a trivial $D^{2}$--bundle over $\partial \left( D^{n-1}(0,r-\varepsilon
)\right) =S^{n-2}$, provided $\alpha $ is sufficiently large and $d$ is
sufficiently small.
\end{diskbundlelemma}

\begin{proof}[Proof of Theorem \protect\ref{Gromoll} assuming the circle and
disk bundle lemmas]
To simplify notation, set $D^{n-1}=D^{n-1}(0,r-\varepsilon )$ and $\partial
D^{n-1}=S^{n-2}=\partial D^{n-1}(0,r-\varepsilon ).$ Let 
\begin{eqnarray}
\begin{tikzpicture}[description/.style={fill=white,inner sep=2pt}]
\matrix (m) [matrix of math nodes, row sep=3em,
column sep=2.5em, text height=1.5ex, text depth=0.25ex]
{ E_{D}^{\alpha }\left( r-\varepsilon \right) & & D^{n-1}\times S^{1} \\
& S^{n-2} & \\ };
\path[->,font=\scriptsize]
(m-1-1) edge node[auto] {$ \Phi _{D} $} (m-1-3)
edge node[auto,swap] {$  \Psi _{d}^{\alpha } $} (m-2-2)
(m-1-3) edge node[auto] {$ p_{1} $} (m-2-2);
\end{tikzpicture}
\label{D trivial diag}
\end{eqnarray}and%
\begin{equation}
\begin{tikzpicture}[description/.style={fill=white,inner sep=2pt}]
\matrix (m) [matrix of math nodes, row sep=3em,
column sep=2.5em, text height=1.5ex, text depth=0.25ex]
{ E_{A}^{\alpha }\left( r-\varepsilon \right) & & S^{n-2}\times D^{2} \\
& S^{n-2} & \\ };
\path[->,font=\scriptsize]
(m-1-1) edge node[auto] {$ \Phi _{A} $} (m-1-3)
edge node[auto,swap] {$  g_{d}^{\alpha} $} (m-2-2)
(m-1-3) edge node[auto] {$ p_{1} $} (m-2-2);
\end{tikzpicture}
\label{A triv diag}
\end{equation}%

\begin{center}

\end{center}
be trivializations of $\Psi _{d}^{\alpha }$ and $g_{d}^{\alpha }.$

By the circle and disk bundle lemmas, 
\begin{equation*}
M_{\alpha }=E_{D}^{\alpha }\left( r-\varepsilon \right) \cup _{\Phi
_{A}\circ \Phi _{D}|_{\partial \left( D^{n-1}\times S^{1}\right)
}^{-1}}E_{A}^{\alpha }\left( r-\varepsilon \right)
\end{equation*}%
with $E_{D}^{\alpha }\left( r-\varepsilon \right) \cong D^{n-1}\times S^{1}$%
, $E_{A}^{\alpha }\left( r-\varepsilon \right) \cong S^{n-2}\times D^{2},$
and $E_{D}^{\alpha }\left( r-\varepsilon \right) \cap E_{A}^{\alpha }\left(
r-\varepsilon \right) \cong S^{n-2}\times S^{1}.$ So we only need to verify
that the gluing map satisfies 
\begin{equation*}
p_{1}\circ \Phi _{A}\circ \Phi _{D}|_{\partial \left( D^{n-1}\times
S^{1}\right) }^{-1}=p_{1},
\end{equation*}%
where $p_{1}:S^{n-2}\times S^{1}\longrightarrow S^{n-2}$ is projection onto
the first factor.

Observe that%
\begin{eqnarray*}
g_{d}^{\alpha }|_{\partial \left( E_{D}^{\alpha }\left( r-\varepsilon
\right) \right) } &=&\mathrm{pr}\circ \Psi _{d}^{\alpha }|_{\partial \left(
E_{D}^{\alpha }\left( r-\varepsilon \right) \right) },\text{ by the
definition of }g_{d}^{\alpha } \\
&=&\Psi _{d}^{\alpha }|_{\partial \left( E_{A}^{\alpha }\left( r-\varepsilon
\right) \right) },
\end{eqnarray*}%
since $\mathrm{pr}$ is a retraction onto $\Psi _{d}^{\alpha }\left( \partial
\left( E_{A}^{\alpha }\left( r-\varepsilon \right) \right) \right) =\partial
D^{n-1}(0,r-\varepsilon ).$%
\begin{equation}
g_{d}^{\alpha }|_{\partial \left( E_{D}^{\alpha }\left( r-\varepsilon
\right) \right) }=\Psi _{d}^{\alpha }|_{\partial \left( E_{A}^{\alpha
}\left( r-\varepsilon \right) \right) }.  \label{g=psi eqn}
\end{equation}%
Thus%
\begin{eqnarray*}
p_{1}\circ \Phi _{A}\circ \Phi _{D}|_{\partial \left( D^{n-1}\times
S^{1}\right) }^{-1} &=&g_{d}^{\alpha }\circ \Phi _{D}|_{\partial \left(
D^{n-1}\times S^{1}\right) }^{-1},\text{ by \ref{A triv diag}} \\
&=&\Psi _{d}^{\alpha }\circ \Phi _{D}|_{\partial \left( D^{n-1}\times
S^{1}\right) }^{-1},\text{ by \ref{g=psi eqn}} \\
&=&p_{1},\text{ by \ref{D trivial diag},}
\end{eqnarray*}%
as desired.
\end{proof}

Before proving the Circle and Disk Bundle Lemmas we establish some
preliminary machinery.

Since every space of directions of $P_{k,r}^{n}$ contains an isometrically
embedded, totally geodesic copy of $S^{n-3},$ and every space of directions
of $P_{k,r}^{n}\setminus \mathcal{S}^{n-2}$ is isometric to $S^{n-1},$ we
get the following (cf Proposition \ref{posdeltastrainedrad}).

\begin{proposition}
\label{Ufm str rad Purse}There are $r,\delta >0$ so that every point in the
purse $P_{k,r}^{n}$ has a neighborhood $B$ that is $\left( n-2,\delta
,r\right) $--strained.

For any neighborhood $U$ of $\mathcal{S}^{n-2},$ there are $r,\delta >0$ so
that every point in $P_{k,r}^{n}\setminus U$ has a neighborhood $B$ that is $%
\left( n,\delta ,r\right) $--strained.
\end{proposition}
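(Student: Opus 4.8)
The plan is to run the strainer construction of \cite{BGP} and \cite{Yam2} underlying Proposition~\ref{posdeltastrainedrad}, fed by the two facts about spaces of directions recorded just before the statement. The elementary observation is that if $\Sigma$ is an Alexandrov space with $\mathrm{curv}\,\Sigma\ge1$ containing an isometrically embedded, totally geodesic $S^{m-1}$, then $\Sigma$ is globally $(m,\delta)$--strained for \emph{every} $\delta>0$: take the vertices $\pm\xi_1,\ldots,\pm\xi_m\in S^{m-1}$ of an inscribed cross--polytope (so $\xi_1,\ldots,\xi_m$ is an orthonormal frame of the ambient $\mathbb{R}^m$) and set $A_i=\{\xi_i\}$, $B_i=\{-\xi_i\}$; because $S^{m-1}$ is totally geodesic in $\Sigma$ we get $\mathrm{dist}_\Sigma(\xi_i,-\xi_i)=\pi$ and $\mathrm{dist}_\Sigma(\pm\xi_i,\pm\xi_j)=\tfrac{\pi}{2}$ for $i\ne j$, which are the defining inequalities of a global $(m,\delta)$--strainer with room to spare.

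Next I would promote such a strainer of $\Sigma_p$ to a strained neighborhood of the base point $p\in X:=P_{k,r}^n$, exactly as in the local part of the argument behind Proposition~\ref{posdeltastrainedrad}. Since directions of segments are dense in $\Sigma_p$, choose segments $pq_i$, $pq_i'$ with $\sphericalangle(\uparrow_p^{q_i},\xi_i)<\tau(\delta)$ and $\sphericalangle(\uparrow_p^{q_i'},-\xi_i)<\tau(\delta)$, and let $a_i$, $b_i$ be the points of $pq_i$, $pq_i'$ at distance $r_{p,i}:=\min(\varepsilon_p,\mathrm{dist}(p,q_i),\mathrm{dist}(p,q_i'))$ from $p$, where $\varepsilon_p>0$ is small. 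Then $\uparrow_p^{a_i}=\uparrow_p^{q_i}$ and $\uparrow_p^{b_i}=\uparrow_p^{q_i'}$, so the hinge angles at $p$ satisfy $\sphericalangle(a_i,p,b_i)>\pi-\tau(\delta)$ and $|\sphericalangle(a_i,p,a_j)-\tfrac{\pi}{2}|,\,|\sphericalangle(a_i,p,b_j)-\tfrac{\pi}{2}|,\,|\sphericalangle(b_i,p,b_j)-\tfrac{\pi}{2}|<\tau(\delta)$. As the strained--point condition is phrased in terms of \emph{comparison} angles, and standard triangle comparison makes comparison angle and hinge angle agree up to $\tau(\varepsilon_p)$ on hinges of length $\le\varepsilon_p$, we obtain the corresponding comparison--angle inequalities at $p$ up to $\tau(\delta,\varepsilon_p)$. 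Since comparison angles are continuous in the vertex, these inequalities, together with $\mathrm{dist}(\{a_i,b_i\},x)>\tfrac12\min_i r_{p,i}$, persist for all $x$ in a small enough ball $B_p\ni p$; once $\delta$ and $\varepsilon_p$ are small, $B_p$ is therefore $(m,\delta,\tfrac12\min_i r_{p,i})$--strained by $\{(a_i,b_i)\}_{i=1}^m$.

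Both statements now follow by compactness. For the first, apply the above at every $p\in X$ with $m=n-2$, using the totally geodesic $S^{n-3}\subset\Sigma_p$, cover the compact space $X$ by finitely many of the resulting balls $B_{p_1},\ldots,B_{p_N}$, and set $r:=\tfrac12\min_j(\min_i r_{p_j,i})$, $\delta:=\min_j\delta_{p_j}$. For the second, fix a neighborhood $U$ of $\mathcal{S}$; each $p\in X\setminus U$ lies outside $\mathcal{S}$, so $\Sigma_p$ contains a totally geodesic $S^{n-1}$, and the construction with $m=n$ gives an $(n,\delta_p,r_p)$--strained ball around $p$; covering the compact set $X\setminus U$ by finitely many of these and taking minima yields $r,\delta$ depending on $U$. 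The only mildly delicate point is keeping the $\tau$--errors uniform in the passage from the exact spherical distances in $\Sigma_p$ to comparison--angle inequalities valid on a whole neighborhood with a lower distance bound; this is the routine bookkeeping already present in \cite{BGP} and \cite{Yam2}. The substance of the proposition is entirely the structure of the spaces of directions of the Purse noted before the statement --- a totally geodesic $S^{n-3}$ at every point, and a round $S^{n-1}$ away from $\mathcal{S}$.
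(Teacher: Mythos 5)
Your argument is correct and is essentially the proof the paper intends: the paper gives no written proof beyond the sentence preceding the proposition (the totally geodesic $S^{n-3}$, resp.\ $S^{n-1}$, inside the spaces of directions, cf.\ Proposition \ref{posdeltastrainedrad}), and your route --- exact global strainers of $\Sigma_p$ from the embedded round sphere, promotion to an $(m,\delta,r_p)$--strained ball via density of segment directions plus monotonicity of comparison angles, then compactness of $P_{k,r}^{n}$ (resp.\ $P_{k,r}^{n}\setminus U$) --- is precisely the standard argument being invoked. Two cosmetic points: the error in ``comparison angle agrees with hinge angle up to $\tau(\varepsilon_p)$'' has no modulus uniform in $p$ (only the pointwise convergence, which is all you actually use before compactness), and in the final step one should fix the target $\delta$ in advance at every point (or take $\max_j \delta_{p_j}$ rather than $\min_j \delta_{p_j}$), since being $(m,\delta,r)$--strained is a weaker condition for larger $\delta$.
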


\begin{remark}
For $x\in \mathcal{S}^{n-2},$ the strainer $\left\{ \left(
a_{i},b_{i}\right) \right\} _{i=1}^{n-2}$ can be chosen to lie in $\mathcal{S%
}^{n-2}.$
\end{remark}

Because the $f_{i}:P_{k,r}^{n}\longrightarrow \mathbb{R}$ are coordinate
functions, $\Psi |_{\mathcal{D}_{k}^{n}(r)\cap H}$ differs from the identity
by translation by $e_{0}.$ Using this we prove the following.

\begin{proposition}
\label{g submersion}There is a neighborhood $U$ of $\mathcal{S}^{n-2}\subset
P_{k,r}^{n}$ so that for any family of open sets $U^{\alpha }\subset
M^{\alpha }$ with $U^{\alpha }\rightarrow U,$ $g_{d}^{\alpha }|_{U^{\alpha
}} $ is a submersion, provided $\alpha $ is sufficiently large and $d$ is
sufficiently small.
\end{proposition}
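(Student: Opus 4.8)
The plan is to reduce the assertion to a uniform lower bound on directional derivatives of the $f_{i,d}^{\alpha}$ and then to rerun the argument of the subsection \textquotedblleft Lower bound on the differential\textquotedblright\ of Section \ref{Cross Cap Stab}. Write $w=\Psi_{d}^{\alpha}(x)$; assuming $w\neq 0$ and $w$ lies in the open annulus, $d\,\mathrm{pr}_{w}$ equals $\frac{r-\varepsilon}{|w|}$ times orthogonal projection onto $w^{\perp}$, with $\frac{r-\varepsilon}{|w|}\geq\frac{r-\varepsilon}{2r}>0$ there, so $g_{d}^{\alpha}=\mathrm{pr}\circ\Psi_{d}^{\alpha}$ is a submersion at $x$ exactly when $\eta\circ d(\Psi_{d}^{\alpha})_{x}\neq 0$ for every unit $\eta\in\mathbb{R}^{n-1}$ with $\eta\perp w$. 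Hence it suffices to produce a $\lambda'>0$, uniform in $x\in U^{\alpha}$, in $\alpha$, and in $d$, so that for every such $\eta$ there is a unit $v\in T_{x}M^{\alpha}$ with $\bigl|\sum_{i=1}^{n-1}\eta_{i}\,D_{v}f_{i,d}^{\alpha}\bigr|>\lambda'$. Shrinking $U$ and $\varepsilon$ and taking $\alpha$ large also arranges that $\Psi_{d}^{\alpha}(U^{\alpha})$ lies in the interior of the annulus and misses the origin, so that $g_{d}^{\alpha}$ is defined on $U^{\alpha}$. I would first prove the analogous estimate for the model map $g=\mathrm{pr}\circ\Psi$ on a fixed neighborhood $U$ of $\mathcal{S}$.

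For the model, the crucial input is that $\Psi|_{\mathcal{D}_{k}^{n}(r)\cap H}$ is the identity up to a translation, so $d\Psi$ is a uniformly bi-Lipschitz isomorphism of the $(n-1)$--plane tangent to $\mathcal{D}_{k}^{n}(r)\cap H$ onto $\mathbb{R}^{n-1}$; this gives the estimate at each point of $\mathcal{S}$, witnessed by directions tangent to $\mathcal{D}_{k}^{n}(r)\cap H$. By Proposition \ref{Ufm str rad Purse} these points carry an $(n-2,\delta,r)$--strainer lying in $\mathcal{S}$, so the witnessing directions are approximable by segment directions (Lemma \ref{Dense Injectivity}); since directional derivatives of $h_{k}\circ\mathrm{dist}(p_{i},\cdot)$ and $h_{k}\circ\mathrm{dist}(A(p_{i}),\cdot)$ vary continuously and $\mathcal{S}$ is compact, the estimate propagates to a neighborhood $U$ of $\mathcal{S}$. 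I also need the analogue of Addenda \ref{1st addendum} and \ref{2nd addendum}: since $\Psi(p_{i})$ lies on the $e_{i}$--axis, the radial direction of the round sphere $\Psi(\mathcal{S})$ there is $\pm e_{i}$, so $d\,\mathrm{pr}$ kills the $i$--th coordinate direction; thus near $p_{i}$ and $A(p_{i})$ the coefficient $|\eta_{i}|$ is small and the estimate can be witnessed with no contribution from $D_{v}f_{i}$ and by a $v$ along a segment avoiding $p_{i}$ and $A(p_{i})$. So there is an $\varepsilon_{0}>0$ such that on $B(p_{i},\varepsilon_{0})\cup B(A(p_{i}),\varepsilon_{0})$ the index $i$ is irrelevant.

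The transfer to the $M^{\alpha}$ then copies the proof of Lemma \ref{Directional derivative lower bound}. If the uniform estimate failed on the $U^{\alpha}$, one would get $x^{\alpha}\in U^{\alpha}$ with $x^{\alpha}\to x\in\overline{U}$, unit covectors $\eta^{\alpha}\perp\Psi_{d}^{\alpha}(x^{\alpha})$ with $\eta^{\alpha}\to\eta$, and for every unit $v$ an arbitrarily small value of $\bigl|\sum_{i}\eta_{i}^{\alpha}D_{v}f_{i,d}^{\alpha}\bigr|$. Approximating the model witnessing direction at $x$ by directions $w^{\alpha}\in\Sigma_{x^{\alpha}}^{\mu}$ via Corollary \ref{convergence of vectors}, passing to convergent geodesics, using the Mean Value Theorem to pick points near $p_{i}^{\alpha}$ and $A(p_{i}^{\alpha})$ realizing the averaged directional derivatives defining $f_{i,d}^{\alpha}$, and invoking Lemma \ref{angle convergence} with the formula for directional derivatives of distance functions, one obtains $D_{w^{\alpha}}f_{i,d}^{\alpha}\to D_{w}f_{z_{i}}$ for every index $i$ with $\mathrm{dist}(x,p_{i})$ and $\mathrm{dist}(x,A(p_{i}))$ bounded below, while for the at most one exceptional index (when $x$ is $\varepsilon_{0}$--close to $p_{i}$ or $A(p_{i})$) one discards the $i$--th term using that $|\eta_{i}^{\alpha}|$ is then small and $|\nabla f_{i,d}^{\alpha}|\leq 2$. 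Together with the robustness of the model estimate under replacing each $p_{i}$ by a nearby $z_{i}\in B(p_{i},d)$ — proved exactly as the corresponding lemmas in Section \ref{Cross Cap Stab} — this contradicts the model estimate, so the desired $\lambda'$ exists and $g_{d}^{\alpha}|_{U^{\alpha}}$ is a submersion. I expect the only genuinely new work beyond rerunning the Section \ref{Cross Cap Stab} machinery to be the model computation above, in particular the observation that the radial direction of $\Psi(\mathcal{S})$ at $\Psi(p_{i})$ is the $f_{i}$--coordinate direction, which is what lets the estimate survive near $p_{i}$, where $\mathrm{dist}(p_{i},\cdot)$ is not controlled.
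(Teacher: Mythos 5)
Your reduction to the covector estimate and your model computation are fine, and they essentially reproduce what the paper records as Lemma \ref{Psi sbmersion} and Addendum \ref{purse addendum}. The genuine gap is in the transfer to the $M^{\alpha}$s, and it is exactly the point where the purse differs from the crosscap: the proposition concerns a neighborhood of the singular set $\mathcal{S}$, and there the purse carries only $\left( n-2,\delta ,r\right) $--strainers with uniform parameters (Proposition \ref{Ufm str rad Purse}); the spaces of directions along $\mathcal{S}$ are not Gromov--Hausdorff close to $S^{n-1}$. Consequently every tool you invoke for the transfer fails to apply: Lemma \ref{Dense Injectivity}, Corollary \ref{convergence of vectors} and Lemma \ref{angle convergence} all presuppose a full $\left( n,\delta ,r\right) $--strainer. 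Moreover, Corollary \ref{convergence of vectors} goes in the wrong direction for your purpose: it converts a sequence of directions in the $M^{\alpha }$s into a limit geodesic; it does not lift a prescribed witnessing direction at $x\in \mathcal{S}$ to directions $w^{\alpha }\in \Sigma _{x^{\alpha }}$ with two--sided control of $D_{w^{\alpha }}f_{i,d}^{\alpha }$. Without a strainer--based hinge estimate the only general fact is semicontinuity of angles (the Remark after Lemma \ref{angle convergence}), which cannot yield a lower bound for $\left\vert \sum_{i}\eta _{i}D_{w^{\alpha }}f_{i,d}^{\alpha }\right\vert $ since the $\eta _{i}$ carry both signs; and you cannot instead apply Corollary \ref{Angle continuity} with the hinge side going to $z_{i}\in B\left( p_{i},d\right) $, because in the purse $\left( p_{i},A\left( p_{i}\right) \right) $ is not in general a $\left( 1,\delta ,r\right) $--strainer at points of $\mathcal{S}$ (for $x$ off the chord through $p_{i}$ and $A\left( p_{i}\right) $ one has $\mathrm{dist}\left( x,p_{i}\right) +\mathrm{dist}\left( x,A\left( p_{i}\right) \right) $ strictly larger than $\mathrm{dist}\left( p_{i},A\left( p_{i}\right) \right) $).

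The paper's proof supplies precisely the device your sketch is missing: the witnessing directions are taken to be directions toward the strainer points themselves. One covers $\mathcal{S}$ by finitely many $\left( n-2,\delta ,r\right) $--strained neighborhoods with strainers $\left\{ \left( a_{i},b_{i}\right) \right\} _{i=1}^{n-2}\subset \mathcal{S}$, lifts them to the $M^{\alpha }$s, and defines the almost horizontal space $H_{x^{\alpha }}^{g_{d}^{\alpha }}=\mathrm{span}_{i}\left\{ \uparrow _{x^{\alpha }}^{a_{i}^{\alpha }}\right\} $. Because one side of each relevant hinge now reaches a strainer point, Corollary \ref{Angle continuity} (which needs only a $1$--strainer) gives two--sided convergence of $\sphericalangle \left( \uparrow _{x^{\alpha }}^{a_{i}^{\alpha }},\uparrow _{x^{\alpha }}^{z_{j}^{\alpha }}\right) $, hence $\left\vert D_{\uparrow _{x^{\alpha }}^{a_{i}^{\alpha }}}\left( g_{d}^{\alpha }\right) _{j}-D_{\uparrow _{x}^{a_{i}}}g_{j}\right\vert <\tau $, and the model estimate transfers along exactly these $n-2$ directions, arguing as in Lemma \ref{Directional derivative lower bound}. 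That suffices: the target $\partial D^{n-1}\left( 0,r-\varepsilon \right) $ is $\left( n-2\right) $--dimensional, so a uniform lower bound for $dg_{d}^{\alpha }$ on the almost orthonormal frame $\left\{ \uparrow _{x^{\alpha }}^{a_{i}^{\alpha }}\right\} $ already forces surjectivity. So the genuinely new work in the purse case is not the model computation, as you assert, but the replacement of the full--strainer machinery of Section \ref{Cross Cap Stab}, unavailable near $\mathcal{S}$, by this restricted horizontal--space argument; as written, your proposal has no valid mechanism for carrying the estimate across the singular set.
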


We will show that our bundle lemmas hold for any $\varepsilon >0$ such that 
\begin{equation*}
\Psi ^{-1}\left( \overline{A^{n-1}(0,r-\varepsilon ,r)}\right) \subset U.
\end{equation*}

Since $\left\{ f_{i}\right\} _{i=1}^{n-1}$ are the $\left( n-1\right) $%
--coordinate functions for the standard embedding of $\mathcal{S}%
^{n-2}\subset \mathbb{R}^{n-1}+e_{0}$, we have

\begin{lemma}
\label{Psi sbmersion}There is a $\lambda >0$ so that for all $v\in T\mathcal{%
S}^{n-2},$ there is a $j$ so that the $j^{th}$--component function of $g$
satisfies%
\begin{equation}
\left\vert D_{v}\left( g_{j}\right) \right\vert >\lambda \left\vert
v\right\vert .  \label{no addendums!}
\end{equation}%
Moreover, there is a $\rho >0$ so that for all $x\in B\left( p_{i},\rho
\right) \cup B\left( A\left( p_{i}\right) ,\rho \right) $ and all $v\in T_{x}%
\mathcal{S}^{n-2}$, the index $j$ in Inequality (\ref{no addendums!}) can be
chosen to be different from $i.$
\end{lemma}

To lift Lemma \ref{Psi sbmersion} to the $M^{\alpha }$s, we need an analog
of $T\mathcal{S}^{n-2}$ within each $M^{\alpha },$ or better, a notion of $%
g_{d}^{\alpha }$--almost horizontal for each $U^{\alpha }\subset M^{\alpha
}. $ To achieve this, cover $\mathcal{S}^{n-2}$ by a finite number of $%
\left( n-2,\delta ,\rho \right) $--strained neighborhoods $B\subset $ $%
P_{k,r}^{n}$ with strainers $\left\{ \left( a_{i},b_{i}\right) \right\}
_{i=1}^{n-2}\subset \mathcal{S}^{n-2}.$ Let $U$ be the union of this finite
collection, and let $U^{\alpha }\subset M^{\alpha }$ converge to $U.$

Given $x^{\alpha }\in U^{\alpha },$ we now define a $g_{d}^{\alpha }$%
--almost horizontal space at $x^{\alpha }$ as follows. Let $B^{\alpha }$ be
a $\left( n-2,\delta ,\rho \right) $--strained neighborhood for $x^{\alpha }$
with strainers $\left\{ \left( a_{i}^{\alpha },b_{i}^{\alpha }\right)
\right\} _{i=1}^{n-2}$ that converge 
\begin{equation}
\left( B^{\alpha },\left\{ \left( a_{i}^{\alpha },b_{i}^{\alpha }\right)
\right\} _{i=1}^{n-2}\right) \longrightarrow \left( B,\left\{ \left(
a_{i},b_{i}\right) \right\} _{i=1}^{n-2}\right) ,  \label{U strainers}
\end{equation}%
where $\left( B,\left\{ \left( a_{i},b_{i}\right) \right\}
_{i=1}^{n-2}\right) $ is part of our finite collection of $\left( n-2,\delta
,\rho \right) $--strained neighborhoods for points in $\mathcal{S}%
^{n-2}\subset P_{k,r}^{n}.$ We set 
\begin{equation*}
H_{x^{\alpha }}^{g_{d}^{\alpha }}\equiv \mathrm{span}_{i\in \left\{ 1,\ldots
,n-2\right\} }\left\{ \uparrow _{x^{\alpha }}^{a_{i}^{\alpha }}\right\} ,
\end{equation*}%
where $\uparrow _{x^{\alpha }}^{a_{i}^{\alpha }}$ is the direction of \emph{%
some }segment from $x^{\alpha }$ back to $a_{i}^{\alpha }.$ The definition
of $H_{x^{\alpha }}^{g_{d}^{\alpha }}$ depends on the choice of
neighborhood, the choice of strainers, and the choice of the directions $%
\uparrow _{x^{\alpha }}^{a_{i}^{\alpha }}.$

Regardless of these choices, $H_{x^{\alpha }}^{g_{d}^{\alpha }}$ satisfies
the following Lemma, which follows from Corollary \ref{Angle continuity}.

\begin{lemma}
\label{close deriv lemma}Let $\left\{ \left( a_{i},b_{i}\right) \right\}
_{i=1}^{n-2}$ and $\left\{ \left( a_{i}^{\alpha },b_{i}^{\alpha }\right)
\right\} _{i=1}^{n-2}$ be as in (\ref{U strainers}). For $\rho >0,$ 
\begin{equation*}
x\in U\setminus \left\{ B\left( p_{j},\rho \right) \cup B\left( A\left(
p_{j}\right) ,\rho \right) \right\} ,
\end{equation*}%
and $x^{\alpha }\in U^{\alpha }\setminus \left\{ B\left( p_{j}^{\alpha
},\rho \right) \cup B\left( A\left( p_{j}^{\alpha }\right) ,\rho \right)
\right\} $ with $\mathrm{dist}\left( x^{\alpha },x\right) <\frac{1}{\alpha }%
, $ we have 
\begin{equation*}
\left\vert D_{\uparrow _{x^{\alpha }}^{a_{i}^{\alpha }}}f_{j,d}^{\alpha
}-D_{\uparrow _{x}^{a_{i}}}f_{j}\right\vert <\tau \left( \left. \delta ,%
\frac{1}{\alpha },d\right\vert \rho \right) .
\end{equation*}
\end{lemma}

The following is a corollary of Lemma \ref{Psi sbmersion}.

\begin{corollary}
\label{expand to U cor}Let $\left( B,\left\{ \left( a_{i},b_{i}\right)
\right\} _{i=1}^{n-2}\right) \subset U$ be as in (\ref{U strainers}). There
are $\lambda ,\varepsilon >0$ so that for $x\in B$ and $v\in \Sigma _{x}$
with 
\begin{equation}
\dsum\limits_{i=1}^{n-2}\cos \sphericalangle \left( v,\uparrow
_{x}^{a_{i}}\right) >1-\varepsilon ,  \label{alom horiz for  S}
\end{equation}%
there is a $j$ so that the $j^{th}$--component function of $g$ satisfies%
\begin{equation}
\left\vert D_{v}\left( g_{j}\right) \right\vert >\lambda \left\vert
v\right\vert ,  \label{der g is big}
\end{equation}%
provided $\mathrm{diam}\left( B\right) $ is sufficiently small.

Moreover, there is a $\rho >0$ so that for all $x\in B\left( p_{i},\rho
\right) \cup B\left( A\left( p_{i}\right) ,\rho \right) ,$ the index $j$ in (%
\ref{der g is big}) can be chosen to be different from $i.$
\end{corollary}

Since directions $v$ that satisfy (\ref{alom horiz for S}) are almost
horizontal for $\mathrm{pr}:\overline{A^{n-1}(0,r-\varepsilon ,2r)}%
\rightarrow \partial \left( D^{n-1}(0,r-\varepsilon )\right) =S^{n-2}.$
Lemma \ref{close deriv lemma} and Corollary \ref{expand to U cor} give us
the following result.

\begin{corollary}
\label{cor 6}There is a $\lambda >0$ so that for all $x^{\alpha }\in
U^{\alpha }$ and all $v\in H_{x^{\alpha }}^{g_{d}^{\alpha }},$ there is a $j$
so that the $j^{th}$--component function of $g_{d}^{\alpha }$ satisfies%
\begin{equation*}
\left\vert D_{v}\left( \left( g_{d}^{\alpha }\right) _{j}\right) \right\vert
>\lambda \left\vert v\right\vert ,
\end{equation*}%
provided $U$ and $d$ are sufficiently small and $\alpha $ is sufficiently
large. In particular, $g_{d}^{\alpha }|_{U^{\alpha }}$ is a submersion.
\end{corollary}

Proposition \ref{g submersion} follows from Corollary \ref{cor 6}.

Let $p_{n}\in \mathcal{D}_{k}^{n}\left( r\right) $ be as in (\ref{dfn of p_i
dfn}). That is, {}%
\begin{equation*}
p_{n}\equiv \left\{ 
\begin{array}{ll}
\cosh (r)e_{0}+\sinh (r)e_{n} & \text{if }k=-1 \\ 
e_{0}+re_{n} & \text{if }k=0 \\ 
\cos (r)e_{0}-\sin (r)e_{n} & \text{if }k=1.%
\end{array}%
\right.
\end{equation*}

Let $Q:\mathcal{D}_{k}^{n}\left( r\right) \longrightarrow P_{k,r}^{n}$ be
the quotient map. We abuse notation and refer to $Q\left( p_{n}\right) $ as $%
p_{n}.$ We define $f_{n}:P_{k,r}^{n}\rightarrow \mathbb{R}$ by 
\begin{equation*}
f_{n}(x)\equiv h_{k}\circ \mathrm{dist}\left( p_{n},x\right) -h_{k}\circ 
\mathrm{dist}\left( p_{0},x\right) .
\end{equation*}%
With a slight modification of the proof of Proposition \ref%
{posdeltastrainedrad}, we get

\begin{lemma}
\label{strainers in level}There are $\delta ,r>0$ so that for all $x\in
E_{D}\left( r-\frac{\varepsilon }{2}\right) $, there is an $\left( n,\delta
,r\right) $--strainer $\left\{ \left( a_{i},b_{i}\right) \right\} _{i=1}^{n}$
for a neighborhood of $x$ with 
\begin{equation*}
\left\{ \left( a_{i},b_{i}\right) \right\} _{i=1}^{n-1}\subset
f_{n}^{-1}\left( l\right)
\end{equation*}%
for some $l\in \mathbb{R}.$
\end{lemma}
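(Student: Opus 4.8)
The plan is to run the proof of Proposition \ref{posdeltastrainedrad} with the strainer frame at each point adapted to $f_{n}$. Since $D^{n-1}(0,r-\varepsilon/2)$ is a definite distance from $\Psi(\mathcal{S})$, the set $E_{0}(\varepsilon/2)$ stays a definite distance from the singular set $\mathcal{S}$; hence it is an open subset of the smooth, constant curvature $k$ locus $P_{k,r}^{n}\setminus\mathcal{S}$, and every $\Sigma_{x}$, $x\in E_{0}(\varepsilon/2)$, is isometric to $S^{n-1}$. Moreover, away from the flat disk $\mathcal{D}_{k}^{n}(r)\cap H$ (the set across which the two lifts of $p_{n}$ exchange roles), $f_{n}$ is the restriction to a constant curvature $k$ manifold of a function $h_{k}\circ\mathrm{dist}(q,\cdot)-h_{k}\circ\mathrm{dist}(p_{0},\cdot)$ for a fixed point $q$; by the explicit model geometry such a function is, up to an additive constant, linear in the ambient Euclidean, Minkowski, or spherical coordinates, so wherever $\nabla f_{n}\neq 0$ its level sets are totally geodesic (if $k=0$) or umbilic (if $k=\pm 1$) constant curvature $k$ hypersurfaces, each of which is itself uniformly $(n-1,\delta,r)$--strained by Proposition \ref{posdeltastrainedrad}.

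For a point $x$ at which $f_{n}$ is smooth with $\nabla f_{n}(x)\neq 0$ and which is not too close to $\mathcal{D}_{k}^{n}(r)\cap H$, I would set $l=f_{n}(x)$. Near $x$ the level set $N_{l}:=f_{n}^{-1}(l)$ is a nice hypersurface whose space of directions at $x$ is the equator $\Sigma\cong S^{n-2}$ of $\Sigma_{x}\cong S^{n-1}$ perpendicular to $\nabla f_{n}(x)$. Choose an almost orthonormal frame $\xi_{1},\dots,\xi_{n-1}\in\Sigma$ and put $\xi_{n}=\nabla f_{n}(x)/|\nabla f_{n}(x)|$; for $i\leq n-1$ let $a_{i},b_{i}\in N_{l}$ be the points at intrinsic distance $r$ from $x$ along the $N_{l}$--geodesics with initial directions $\pm\xi_{i}$. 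Because these geodesics agree to first order with the ambient segments and $x$ lies in the constant curvature part (so comparison angle equals actual angle), the directions $\uparrow_{x}^{a_{i}},\uparrow_{x}^{b_{i}}$ differ from $\pm\xi_{i}$ by $\tau(r)$ and one gets $\widetilde{\sphericalangle}(a_{i},x,b_{i})>\pi-\tau(r)$, $\widetilde{\sphericalangle}(a_{i},x,a_{j})>\tfrac{\pi}{2}-\tau(r)$, and so on. The pair $(a_{n},b_{n})$ may be taken along $\pm\xi_{n}$ in $P_{k,r}^{n}$ with no constraint, and its cross-conditions with the first $n-1$ pairs hold because $\xi_{n}\perp\Sigma$. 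Shrinking $r$ and enlarging $\delta$ past the $\tau(r)$'s gives the required $(n,\delta,r)$--strainer with $\{(a_{i},b_{i})\}_{i=1}^{n-1}\subset f_{n}^{-1}(l)$.

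The main obstacle is the remaining case: $x$ on or near $\mathcal{D}_{k}^{n}(r)\cap H$, together with the two critical points of $f_{n}$ (its minimum $p_{0}$ and its maximum $p_{n}$, both of which lie in $E_{0}(\varepsilon/2)$), where the level sets of $f_{n}$ degenerate. When $k=0$ this is harmless — near the flat disk the level sets of $f_{n}$ are pairs of parallel totally geodesic $(n-1)$--disks, and near $p_{0},p_{n}$ thin totally geodesic lenses, so one picks $l$ slightly off the extreme value, places the first $n-1$ pairs on one such sheet passing within $o(r)$ of $x$, and takes $(a_{n},b_{n})$ along the $f_{n}$--gradient direction, the comparison angles at $x$ being unchanged up to $\tau$. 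When $k=\pm 1$ the level sets near $\mathcal{D}_{k}^{n}(r)\cap H$ are instead ``tents'' of two umbilic caps meeting along the flat disk at a fixed dihedral angle, and this is the step that needs genuine care: the idea is again to exploit the freedom in $l$, to put the $n-2$ ``horizontal'' pairs on the ridge $S^{n-2}$ and the last of the $n-1$ pairs across the tent, choosing the (fixed, not infinitesimal) strainer radius $r$ small enough that all these points lie in the smooth constant curvature region, so that the near-antipodality and cross-angle estimates reduce to triangle comparison in the model caps exactly as in the previous paragraph. Finally, once the lemma is established pointwise, uniformity of $\delta$ and $r$ over $E_{0}(\varepsilon/2)$ follows from its compactness together with continuity of comparison angles (Corollary \ref{Angle continuity}) and the continuous dependence of $f_{n}^{-1}(l)$ on $l$, which also lets one pass from a dense set of ``good'' points to the rest by a limiting argument, since the $(n,\delta,r)$--strainer inequalities are closed.
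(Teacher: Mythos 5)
Your overall plan—construct the strainer directly in the model, putting the first $n-1$ pairs along directions tangent to a level set of $f_{n}$ and the last pair along the gradient direction, then get uniform $\delta$ and scale by compactness—is the same kind of direct adaptation of Proposition \ref{posdeltastrainedrad} that the paper invokes (it offers no more detail than ``a slight modification''). The genuine gap is in your structural premise: $P_{k,r}^{n}\setminus \mathcal{S}$ is \emph{not} a smooth, constant curvature locus. The purse is the double of a half--disk, and along the ridge $Q\left(\partial \mathcal{D}_{k}^{n}(r)\right)\setminus\mathcal{S}$, i.e.\ the doubled, strictly convex spherical part of the boundary, curvature concentrates: two ridge points at intrinsic distance $d$ along the ridge are joined through either sheet by a path of length strictly less than $d$, so the ridge (the fixed set of the sheet--swapping reflection) is not locally minimizing, which is impossible in a constant curvature manifold. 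Only $\mathcal{S}$ has a non-round space of directions, but smoothness fails on the whole ridge, and that ridge runs straight through $E_{0}(\varepsilon /2)$: every fiber circle of $\Psi$ crosses it, $Q(p_{n})$ lies on it, and each level set $f_{n}^{-1}(l)$ is a closed ``pillow'' with a crease along the ridge in addition to the tent crease over $\mathcal{D}_{k}^{n}(r)\cap H$. Consequently the computations your main case rests on—level sets are umbilic hypersurfaces of a space form, comparison angle equals angle at $x$, intrinsic level--set geodesics agree to first order with ambient segments, and ``choose the strainer scale so small that all strainer points lie in the constant curvature region''—are unavailable for $x$ on or near the ridge, and no uniform choice of scale avoids it. Your exceptional case analysis is aimed at the harmless locus: across $\mathcal{D}_{k}^{n}(r)\cap H$ the purse \emph{is} locally isometric to the space form and only $f_{n}$ itself has a kink there (which you handle correctly by adjusting $l$), while the genuinely singular locus is never addressed.

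What survives at ridge points is exactly what Propositions \ref{posdeltastrainedrad} and \ref{Ufm str rad Purse} actually use: $\Sigma _{x}$ is the double of a hemisphere, hence a round $S^{n-1}$, and the directions tangent to $f_{n}^{-1}(l)$ form a doubled half--equator which is intrinsically a round $S^{n-2}$, though it sits ``bent'' inside $\Sigma _{x}$ (nearly flat near $\mathcal{S}$, badly folded near $Q(p_{n})$, which is why the freedom in $l$ is essential there). So the construction can be repaired, but near the ridge it must be run at the level of spaces of directions, or by exploiting the reflection symmetry of the double and the explicit distances of the doubled convex body, rather than by treating the ambient geometry as constant curvature; that is where the actual content of the paper's ``slight modification'' lies. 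A minor slip in the same direction: for $k=0$ the maximum of $f_{n}$ is attained on all of $Q\left(\mathcal{D}_{k}^{n}(r)\cap H\right)$ and the minimum at the ridge point $Q(p_{n})$, not at $p_{0}$ and $p_{n}$; this does not affect the structure of your argument, but it reflects the same misreading of where the purse is singular.
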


We cover $E_{D}\left( r-\frac{\varepsilon }{2}\right) $ by a finite number
of such $\left( n,\delta ,r\right) $--strained sets and make the following
definition.

\begin{definition}
For $x\in E_{D}\left( r-\frac{\varepsilon }{2}\right) ,$ set%
\begin{equation*}
H_{x}^{\Psi }\equiv \mathrm{span}_{i\in \left\{ 1,\ldots ,n-1\right\}
}\left\{ \uparrow _{x}^{a_{i}}\right\} ,
\end{equation*}%
where $\left\{ \left( a_{i},b_{i}\right) \right\} _{i=1}^{n-1}$ is as in the
previous lemma.
\end{definition}

Since $\Psi :E_{D}\left( r-\frac{\varepsilon }{2}\right) \longrightarrow
D^{n-1}\left( r-\frac{\varepsilon}{2}\right) $ is simply orthogonal
projection, we have

\begin{lemma}
There is a $\lambda >0$ so that for all $x\in E_{D}\left( r-\frac{%
\varepsilon }{2}\right) $ and all $v\in H_{x}^{\Psi },$ there is an $i$ so
that 
\begin{equation*}
\left\vert D_{v}f_{i}\right\vert >\lambda \left\vert v\right\vert .
\end{equation*}
\end{lemma}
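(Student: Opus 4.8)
The plan is to read everything off the completely explicit description of $\Psi$ on $E_{0}(\varepsilon/2)$. The first observation is that $E_{0}(\varepsilon/2)$ lies at a fixed positive distance from the singular set $\mathcal{S}$: $\Psi$ carries $\mathcal{S}$ onto the boundary sphere of the image of $\Psi$ in $\mathbb{R}^{n-1}$, whereas $\Psi(E_{0}(\varepsilon/2))\subset D^{n-1}(0,r-\varepsilon/2)$ lies strictly inside it, and $\Psi$ is Lipschitz. Hence $P_{k,r}^{n}$ is an honest Riemannian manifold on a neighborhood of $E_{0}(\varepsilon/2)$ and comparison angles there coincide with genuine angles. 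Moreover, in each of the three model cases $f_{i}=x_{i}$, so $\Psi$ is (the restriction of) the orthogonal coordinate projection onto $\operatorname{span}\{e_{1},\dots,e_{n-1}\}$, with circle fibres in the ``$e_{0}$--$e_{n}$'' direction. Thus on $E_{0}(\varepsilon/2)$ the differential $d\Psi_{x}\colon T_{x}P_{k,r}^{n}\to\mathbb{R}^{n-1}$ is onto, its kernel is the line $\mathbb{R}\,V(x)$ spanned by the unit vertical vector $V(x)$ tangent to the fibre, and $|d\Psi_{x}(v)|\ge c_{0}\,|v|\sin\sphericalangle(v,V(x))$ for a constant $c_{0}=c_{0}(n,k,r)>0$. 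So the lemma will follow once I produce $\lambda_{0}>0$ with $\sin\sphericalangle(v,V(x))>\lambda_{0}$ for every $x\in E_{0}(\varepsilon/2)$ and every nonzero $v\in H_{x}^{\Psi}$: then some entry of $d\Psi_{x}(v)=(D_{v}f_{1},\dots,D_{v}f_{n-1})$ exceeds $c_{0}\lambda_{0}|v|/\sqrt{n-1}$ in modulus, and $\lambda=c_{0}\lambda_{0}/\sqrt{n-1}$ works.

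Next I would extract that angle bound from the strainer. Let $\{(a_{i},b_{i})\}_{i=1}^{n}$ be the $(n,\delta,r)$--strainer furnished by Lemma \ref{strainers in level}, so $\{(a_{i},b_{i})\}_{i=1}^{n-1}\subset f_{n}^{-1}(l)$. On the smooth set, where comparison angles are true angles, the strainer inequalities give that $\uparrow_{x}^{a_{1}},\dots,\uparrow_{x}^{a_{n}}$ is a $\tau(\delta)$--orthonormal frame of $T_{x}P_{k,r}^{n}$, that $\uparrow_{x}^{b_{i}}=-\uparrow_{x}^{a_{i}}$ up to $\tau(\delta)$, and that $\uparrow_{x}^{a_{n}}$ is $\tau(\delta)$--orthogonal to $H_{x}^{\Psi}=\operatorname{span}_{i\le n-1}\{\uparrow_{x}^{a_{i}}\}$; so $\uparrow_{x}^{a_{n}}$ is, up to $\tau(\delta)$, a unit normal to $H_{x}^{\Psi}$. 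It therefore suffices to show $\uparrow_{x}^{a_{n}}$ makes a definite angle with the horizontal hyperplane $V(x)^{\perp}=\operatorname{span}\{\nabla f_{1},\dots,\nabla f_{n-1}\}$, i.e.\ that $H_{x}^{\Psi}$ is uniformly transverse to the vertical line $\mathbb{R}\,V(x)$.

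The transversality I would get from $f_{n}$. In each model the level sets of $f_{n}$ on the smooth set are pieces of the hypersurfaces $\{x_{n}=\mathrm{const}\}$ together with their $R$--mirror images; a direct computation shows $f_{n}$ has no critical points on $E_{0}(\varepsilon/2)$ away from the crease $\{x_{n}=0\}$ and that $|D_{V(x)}f_{n}|$ is bounded below there by a constant depending only on $n,k,r$ (with the same bound for the one--sided derivatives at the crease), so $V(x)$ is uniformly transverse to the level sets of $f_{n}$. On the other hand $\widetilde{\sphericalangle}(a_{i},x,b_{i})>\pi-\delta$ puts $x$ within $\tau(\delta\,|\,r)$ of the segment $a_{i}b_{i}$, whose endpoints lie in $f_{n}^{-1}(l)$; since $\operatorname{dist}(a_{i},b_{i})$ is close to $2r<\pi$, the values of $f_{n}$ along this segment — and hence the initial direction of $xa_{i}$ — are controlled by the bounded Hessian of $f_{n}$, giving that each $\uparrow_{x}^{a_{i}}$, $i\le n-1$, makes an angle with $V(x)$ bounded away from $0$ by a constant depending only on $n,k,r$ (up to $\tau(\delta)$). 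Together with the $\tau(\delta)$--orthonormality of the frame, this forces $\sphericalangle\big(\mathbb{R}\,V(x),H_{x}^{\Psi}\big)$ to be bounded below, which is the required $\lambda_{0}$.

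The step I expect to be the main obstacle is the last one near the crease $\{x_{n}=0\}$, where $f_{n}$ is only $C^{0}$ (a minimum of two smooth distance terms) and the two ``sheets'' of $f_{n}^{-1}(l)$ meet, so the strainer pairs may straddle the two sheets; I would handle this by exploiting that the constraint $\operatorname{dist}(\{a_{i},b_{i}\},x)>r$ together with $r<\pi/2$ keeps $f_{n}^{-1}(l)$ from pinching, so that the portion of $f_{n}^{-1}(l)$ meeting the strainer is uniformly bi-Lipschitz to a flat $(n-1)$--disk and the estimates above remain uniform over $x\in E_{0}(\varepsilon/2)$.
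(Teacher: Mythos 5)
Your high-level reduction --- $\Psi$ is the coordinate projection, so it suffices to show that $H_{x}^{\Psi}$ makes a definite angle with the fibre direction --- is exactly what the paper's one-line proof is pointing at, but two of your supporting claims are wrong, and the second is a genuine gap. First, $E_{0}(\varepsilon/2)$ does \emph{not} lie in a Riemannian part of $P_{k,r}^{n}$: although it stays away from $\mathcal{S}$, every fibre circle of $\Psi$ meets the seam $Q\left(\partial\mathcal{D}_{k}^{n}(r)\setminus H\right)$, and along the seam the purse is locally the double of $\mathcal{D}_{k}^{n}(r)$ along its boundary, an Alexandrov space with curvature concentrated on the seam. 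There comparison angles are not angles, the $f_{i}$ only have one-sided derivatives with two-valued gradients, and the ``kernel of $d\Psi_{x}$'' consists of the two non-antipodal branch directions of the broken fibre rather than a line $\mathbb{R}\,V(x)$. This is repairable (it is the same multivalued-gradient discussion as in Section \ref{Cross Cap Stab}), but not by the smoothness claim you make.

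Second, and decisively: you try to deduce that each $\uparrow_{x}^{a_{i}}$, $i\le n-1$, is uniformly non-vertical from the bare facts that $a_{i},b_{i}\in f_{n}^{-1}(l)$, that $\widetilde{\sphericalangle}(a_{i},x,b_{i})>\pi-\delta$, and a ``bounded Hessian of $f_{n}$''. Besides $f_{n}$ not being $C^{2}$ (it has a concave crease along the central slice), that implication is false, precisely in the straddling regime you flag. For $k=0$ take $x=p_{0}$, let $l$ be the level whose two sheets are the slices at heights $\pm c$, with $c$ between the strainer scale of Lemma \ref{strainers in level} and a small multiple of $r$, and put $a_{1},b_{1}$ at distance $c\sqrt{2}$ from $x$ in the directions $\pm(e_{1}+e_{n})/\sqrt{2}$ and $a_{2},b_{2}$ in the directions $\pm(-e_{1}+e_{n})/\sqrt{2}$ (for $n>3$ the remaining level-set pairs can be taken much farther away, tilted by the small amount needed to land on the sheets, and the $n$-th pair is unconstrained). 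All strainer inequalities hold --- the relevant comparison angles are $\pi$ and $\pi/2$ on the nose --- and all these points lie in $f_{n}^{-1}(l)$, yet $\mathrm{span}\{\uparrow_{x}^{a_{1}},\uparrow_{x}^{a_{2}}\}$ contains the vertical direction, on which every $f_{i}$, $i\le n-1$, has derivative $0$; so no $\lambda$ exists for this choice of strainer. Your proposed fix (the level set does not pinch and is bi-Lipschitz to a disk) does not exclude this configuration, whose sheets are flat and far apart. What the lemma actually rests on is the construction behind Lemma \ref{strainers in level}: in the finite cover used to define $H_{x}^{\Psi}$, the first $n-1$ pairs are chosen essentially tangent to a single sheet of the level set near $x$, so the $\uparrow_{x}^{a_{i}}$ are almost horizontal \emph{by construction}, and the conclusion is then immediate from ``$\Psi$ is orthogonal projection''. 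Any correct proof must invoke that construction (or otherwise record that the chosen $a_{i},b_{i}$ lie in the sheet through $x$ at its own level); it cannot be run from the statement of Lemma \ref{strainers in level} alone, which is where your argument breaks.
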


To lift this lemma to the $M^{\alpha }$s, we need a notion of $\Psi
_{d}^{\alpha }$--almost horizontal for each $M^{\alpha }.$ Given $z^{\alpha
}\in E_{D}^{\alpha }\left( r-\frac{\varepsilon }{2}\right) ,$ we define a $%
\Psi _{d}^{\alpha }$--almost horizontal space at $z^{\alpha }$ as follows.
Let $B^{\alpha }$ be an $\left( n,\delta ,r\right) $--strained neighborhood
for $z^{\alpha }$ with strainers $\left\{ \left( a_{i}^{\alpha
},b_{i}^{\alpha }\right) \right\} _{i=1}^{n}$ that converge to $\left\{
\left( a_{i},b_{i}\right) \right\} _{i=1}^{n},$ where $\left( B,\left\{
\left( a_{i},b_{i}\right) \right\} _{i=1}^{n}\right) $ is part of our finite
collection of $\left( n,\delta ,r\right) $--strained neighborhoods for
points in $E_{D}\left( r-\frac{\varepsilon }{2}\right) $ that comes from
Lemma \ref{strainers in level}. We set 
\begin{equation*}
H_{z^{\alpha }}^{\Psi _{d}^{\alpha }}\equiv \mathrm{span}_{i\in \left\{
1,\ldots ,n-1\right\} }\left\{ \uparrow _{z^{\alpha }}^{a_{i}^{\alpha
}}\right\} ,
\end{equation*}%
where $\uparrow _{z^{\alpha }}^{a_{i}^{\alpha }}$ is the direction of \emph{%
some }segment from $z^{\alpha }$ back to $a_{i}^{\alpha }.$ Regardless of
these choices, $H_{z^{\alpha }}^{\Psi _{d}^{\alpha }}$ satisfies the
following lemma, whose proof is nearly identical to the proof of Corollary %
\ref{cor 6}.

\begin{lemma}
\label{Pis on disk bundle lemma}There is a $\lambda >0$ so that for all $%
z^{\alpha }\in E_{D}^{\alpha }\left( r-\frac{\varepsilon }{2}\right) $ and
all $v\in H_{z^{\alpha }}^{\Psi _{d}^{\alpha }},$ there is an $i\in \left\{
1,\ldots ,n-1\right\} $ so that 
\begin{equation*}
\left\vert D_{v}f_{i,d}^{\alpha }\right\vert >\lambda \left\vert
v\right\vert ,
\end{equation*}%
provided $\alpha $ is sufficiently large and $d$ is sufficiently small. In
particular, $\Psi _{d}^{\alpha }|_{E_{0}^{\alpha }\left( \varepsilon
/2\right) }$ is a submersion.
\end{lemma}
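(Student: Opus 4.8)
The plan is to argue by contradiction, following the proof of Lemma~\ref{Directional derivative lower bound} essentially word for word, with the model input supplied by the previous lemma in place of its crosscap analog. Suppose the statement fails; after rescaling we obtain $z^{\alpha}\in E_{0}^{\alpha}(\varepsilon/2)$ and unit vectors $v^{\alpha}\in H_{z^{\alpha}}^{\Psi_{d}^{\alpha}}$ with $|D_{v^{\alpha}}f_{i,d}^{\alpha}|\to 0$ for every $i\in\{1,\dots,n-1\}$. Passing to a subsequence, $z^{\alpha}\to z\in E_{0}(\varepsilon/2)$ and the associated $(n,\delta,r)$--strainers converge, $(B^{\alpha},\{(a_{i}^{\alpha},b_{i}^{\alpha})\}_{i=1}^{n})\to(B,\{(a_{i},b_{i})\}_{i=1}^{n})$, where $(B,\{(a_{i},b_{i})\})$ is one of the finitely many strained neighborhoods supplied by Lemma~\ref{strainers in level}, so $\{(a_{i},b_{i})\}_{i=1}^{n-1}$ lies in a level set of $f_{n}$. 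A first observation, playing the role that Addendum~\ref{2nd addendum} plays in Section~\ref{Cross Cap Stab}, is that $z$ stays a definite distance from $\mathcal{S}$: since $\Psi$ agrees up to a translation with the identity on $\mathcal{D}_{k}^{n}(r)\cap H$, it carries a neighborhood of $\mathcal{S}$ onto the complement of $D^{n-1}(0,r-\varepsilon/2)$, so $\mathrm{dist}(z,\mathcal{S})\ge c(\varepsilon)>0$; as $p_{i},A(p_{i})\in\mathcal{S}$ for $i\le n-1$, this bounds $\mathrm{dist}(z,p_{i})$ and $\mathrm{dist}(z,A(p_{i}))$ below.

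Next I would replace $v^{\alpha}$ by a genuine segment direction, exactly as in Corollary~\ref{convergence of vectors}: fixing a small $\mu>0$, choose $w^{\alpha}\in\Sigma_{z^{\alpha}}^{\mu}$ with $\sphericalangle(v^{\alpha},w^{\alpha})<\tau(\delta,\mu)$, so that, since $|\nabla f_{i,d}^{\alpha}|\le 2$, $|D_{w^{\alpha}}f_{i,d}^{\alpha}|<\tau(\delta,\mu,1/\alpha,d)$ for all $i$. After a further subsequence $\gamma_{w^{\alpha}}|_{[0,\mu]}$ converges to a segment $\gamma_{w}|_{[0,\mu]}$ in $P_{k,r}^{n}$ with initial direction $w$. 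Applying Lemma~\ref{angle convergence} to the hinges formed by $\gamma_{w^{\alpha}}$ and the segments $z^{\alpha}a_{i}^{\alpha}$ gives $\sphericalangle(\uparrow_{z^{\alpha}}^{a_{i}^{\alpha}},w^{\alpha})\to\sphericalangle(\uparrow_{z}^{a_{i}},w)$, and together with the bound on $\sphericalangle(v^{\alpha},w^{\alpha})$ and the convergence $\uparrow_{z^{\alpha}}^{a_{i}^{\alpha}}\to\uparrow_{z}^{a_{i}}$ this places $w$ within $\tau(\delta,\mu)$ of $H_{z}^{\Psi}=\mathrm{span}_{i\le n-1}\{\uparrow_{z}^{a_{i}}\}$. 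For the nearest point $w'$ to $w$ in $H_{z}^{\Psi}$, the previous lemma produces an index $i_{0}$ with $|D_{w'}f_{i_{0}}|>\lambda$, whence $|D_{w}f_{i_{0}}|>\lambda-\tau(\delta,\mu)$; and a compactness argument as in Section~\ref{Cross Cap Stab}, using that $z$ is bounded away from $p_{i_{0}}$ and $A(p_{i_{0}})$, upgrades this to the statement that $|D_{w}(h_{k}\circ\mathrm{dist}(z_{+},\cdot)-h_{k}\circ\mathrm{dist}(z_{-},\cdot))|>\lambda-\tau(\delta,\mu)$ for all $z_{-}\in B(p_{i_{0}},d)$ and $z_{+}\in B(A(p_{i_{0}}),d)$, provided $d$ is small.

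Finally I would transfer the lower bound back to $M^{\alpha}$. Applying the Mean Value Theorem to the two integrals defining $f_{i_{0},d}^{\alpha}$ yields $z_{-}^{\alpha}\in B(p_{i_{0}}^{\alpha},d)$ and $z_{+}^{\alpha}\in B(A(p_{i_{0}}^{\alpha}),d)$ with $D_{w^{\alpha}}f_{i_{0},d}^{\alpha}=D_{w^{\alpha}}(h_{k}\circ\mathrm{dist}(z_{+}^{\alpha},\cdot))-D_{w^{\alpha}}(h_{k}\circ\mathrm{dist}(z_{-}^{\alpha},\cdot))$. Choosing segments $z^{\alpha}z_{\pm}^{\alpha}$ that realize the angles $\sphericalangle(\uparrow_{z^{\alpha}}^{z_{\pm}^{\alpha}},w^{\alpha})=\sphericalangle(\Uparrow_{z^{\alpha}}^{z_{\pm}^{\alpha}},w^{\alpha})$, passing to a subsequence so that $z_{\pm}^{\alpha}\to z_{\pm}$ and these segments converge, and using that $\mathrm{dist}(z,z_{\pm})\ge c(\varepsilon)>\mu$, Lemma~\ref{angle convergence} and the formula for directional derivatives of distance functions give $|D_{w^{\alpha}}f_{i_{0},d}^{\alpha}-D_{w}(h_{k}\circ\mathrm{dist}(z_{+},\cdot)-h_{k}\circ\mathrm{dist}(z_{-},\cdot))|<\tau(\delta,\tau(1/\alpha\,|\,\mu))$, exactly as in the derivation of Inequality~\ref{derivative convergence}. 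This contradicts $|D_{w^{\alpha}}f_{i_{0},d}^{\alpha}|<\tau(\delta,\mu,1/\alpha,d)$ and the lower bound on $|D_{w}(h_{k}\circ\mathrm{dist}(z_{+},\cdot)-h_{k}\circ\mathrm{dist}(z_{-},\cdot))|$ once $\delta,\mu,1/\alpha,d$ are taken small enough. The ``in particular'' then follows: since $\{\uparrow_{x}^{a_{i}}\}_{i=1}^{n}$ is almost orthonormal, $H_{x}^{\Psi_{d}^{\alpha}}$ is $(n-1)$--dimensional, and the estimate just proved shows the restriction of $d(\Psi_{d}^{\alpha})_{x}$ to $H_{x}^{\Psi_{d}^{\alpha}}$ is injective, hence an isomorphism onto $\mathbb{R}^{n-1}$, so $d(\Psi_{d}^{\alpha})_{x}$ is surjective for all $x\in E_{0}^{\alpha}(\varepsilon/2)$.

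The step I expect to be the main obstacle is the $\tau$--bookkeeping around the fact that the limiting direction $w$ lands only near, not inside, the model horizontal distribution $H_{z}^{\Psi}$: one needs the lower bound coming from the previous lemma to be stable under $\tau(\delta,\mu)$--perturbations of the direction, and one needs the a priori estimate $\mathrm{dist}(z,\mathcal{S})\ge c(\varepsilon)$ so that no segment entering the hinge comparisons of Lemma~\ref{angle convergence} degenerates to length zero in the limit. Once these are in place the remainder is a line-by-line transcription of the proof of Lemma~\ref{Directional derivative lower bound}.
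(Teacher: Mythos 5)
Your proposal is correct and is essentially the paper's intended argument: the paper itself only remarks that the proof is nearly identical to that of Lemma \ref{Directional derivative lower bound}, and your write-up is a faithful transcription of that contradiction/limiting argument, with exactly the right purse-specific adjustments (points of $E_{0}(\varepsilon/2)$ are uniformly far from $\mathcal{S}\supset\{p_{i},A(p_{i})\}_{i\le n-1}$, so no analogue of Addendum \ref{2nd addendum} is needed; the limit direction is pinned near $H_{z}^{\Psi}$ via the strainer angles; and the Mean Value Theorem is applied to the two integrals in \ref{fs for pre-purse} separately, since $A$ need not be an isometry of $M^{\alpha}$). The $\tau$--bookkeeping you flag does go through, since the model bound of the preceding lemma is stable under small angular perturbations of $v$ and under moving $p_{i},A(p_{i})$ within $d$--balls, exactly as in Section \ref{Cross Cap Stab}.
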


\begin{proposition}
\label{Param Stabil}$E_{A}^{\alpha }\left( r-\varepsilon \right) $ is
homeomorphic to $S^{n-2}\times D^{2},$ and $E_{D}^{\alpha }\left(
r-\varepsilon \right) $ is homeomorphic to $D^{n-1}\times S^{1},$ provided $%
\alpha $ is sufficiently large and $d$ is sufficiently small.
\end{proposition}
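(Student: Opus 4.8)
\emph{Plan.} The plan is to first pin down the topology of the two \emph{model} pieces $E_{0}(\varepsilon)=\Psi^{-1}(D^{n-1}(0,r-\varepsilon))$ and $E_{1}(\varepsilon)=\Psi^{-1}(\overline{A^{n-1}(0,r-\varepsilon,2r)})$ inside $P_{k,r}^{n}$, and then to transfer these identifications to the $M^{\alpha}$ by Perelman's Parameterized Stability Theorem, feeding in the two submersion statements already proved above: that $\Psi_{d}^{\alpha}$ is a submersion on $E_{0}^{\alpha}(\varepsilon/2)$, and (Proposition \ref{g submersion}) that $g_{d}^{\alpha}$ is a submersion on a neighbourhood $U^{\alpha}$ of $\mathcal{S}$, where $\varepsilon$ is chosen so that $\Psi^{-1}(\overline{A^{n-1}(0,r-\varepsilon,r)})\subset U$. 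As in Section \ref{Cross Cap Stab} one has $\Psi_{d}^{\alpha}\to\Psi$ and $g_{d}^{\alpha}\to g$ in the Gromov--Hausdorff sense, and hence $E_{i}^{\alpha}(\varepsilon)\to E_{i}(\varepsilon)$ for $i=0,1$.

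\emph{The model pieces.} First I would compute $E_{0}(\varepsilon)$ and $E_{1}(\varepsilon)$ by inspection. The map $\Psi=(f_{1},\dots,f_{n-1})$ factors through the quotient $Q:\mathcal{D}_{k}^{n}(r)\to P_{k,r}^{n}$, and $\Psi\circ Q$ is, up to a translation and an $h_{k}$--reparametrization, orthogonal projection of $\mathcal{D}_{k}^{n}(r)$ onto $\mathrm{span}\{e_{1},\dots,e_{n-1}\}$, with image the round disk $D^{n-1}(0,r)$. Over an interior point the fibre of $\Psi\circ Q$ is an arc in $\mathcal{D}_{k}^{n}(r)$ whose two endpoints on $\partial\mathcal{D}_{k}^{n}(r)$ are interchanged by the reflection $R$; hence each $\Psi$--fibre over an interior point is a circle, so $E_{0}(\varepsilon)$ is a circle bundle over the disk $D^{n-1}(0,r-\varepsilon)$ and therefore $E_{0}(\varepsilon)\cong D^{n-1}\times S^{1}$. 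On the other hand $E_{1}(\varepsilon)$ is a tubular neighbourhood of the singular $(n-2)$--sphere $\mathcal{S}$. The round $S^{n-2}=H\cap\partial\mathcal{D}_{k}^{n}(r)$ has trivial normal bundle in $\mathcal{D}_{k}^{n}(r)$ --- it is spanned globally by the inward normal of $\partial\mathcal{D}_{k}^{n}(r)$ together with the $e_{n}$--direction --- and passing to the $R$--quotient replaces one of these two directions by a two--dimensional cone of total angle $\pi$; the resulting neighbourhood of $\mathcal{S}$ is thus homeomorphic to $S^{n-2}\times D^{2}$, and under this homeomorphism $g=\mathrm{pr}\circ\Psi$ becomes projection to the first factor. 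In particular $g:E_{1}(\varepsilon)\to S^{n-2}$ is a submersion in the Alexandrov sense.

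\emph{Transfer to the $M^{\alpha}$.} For $\alpha$ large the image $\Psi_{d}^{\alpha}(M^{\alpha})$ lies in $D^{n-1}(0,r+\varepsilon/2)$, so $\overline{E_{0}^{\alpha}(\varepsilon)}$ lies in the open set $E_{0}^{\alpha}(\varepsilon/2)$ on which $\Psi_{d}^{\alpha}$ is a submersion over $D^{n-1}(0,r-\varepsilon)$, while $\overline{E_{1}^{\alpha}(\varepsilon)}=(\Psi_{d}^{\alpha})^{-1}(\{\,r-\varepsilon\le|\cdot|\le r+\varepsilon/2\,\})$ converges to $\Psi^{-1}(\overline{A^{n-1}(0,r-\varepsilon,r)})\subset U$ and hence lies in $U^{\alpha}$ for $\alpha$ large, where $g_{d}^{\alpha}$ is a submersion over $S^{n-2}$. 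Since $M^{\alpha}\to P_{k,r}^{n}$ without collapse, I would then apply Perelman's Parameterized Stability Theorem (\cite{Kap}; exactly as in the treatment of $f_{0,d}^{\alpha}$ in Section 5, one must enlarge the class of admissible functions to include the $h_{k}$--warped averaged distance functions $f_{i,d}^{\alpha}$, which goes through by the same routine but tedious verification) to the families $\Psi_{d}^{\alpha}:E_{0}^{\alpha}(\varepsilon)\to D^{n-1}(0,r-\varepsilon)$ and $g_{d}^{\alpha}:E_{1}^{\alpha}(\varepsilon)\to S^{n-2}$ (covering $S^{n-2}$ by coordinate charts in the second case), to obtain, for all large $\alpha$ and small $d$, homeomorphisms onto the corresponding model pieces commuting with the base maps. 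Combined with the previous paragraph this gives $E_{0}^{\alpha}(\varepsilon)\cong D^{n-1}\times S^{1}$ and $E_{1}^{\alpha}(\varepsilon)\cong S^{n-2}\times D^{2}$.

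\emph{Main obstacle.} The hard part will be checking the hypotheses of parameterized stability on the relevant regions: that $g$ is genuinely non-critical, in Perelman's sense, along the singular sphere $\mathcal{S}$, where $P_{k,r}^{n}$ is not a manifold and $\Psi$ itself is critical, and that the boundaries $\partial E_{i}^{\alpha}(\varepsilon)$ cause no difficulty; together with the admissibility bookkeeping this is the technical heart of the proof. One must also verify the two model identifications with care, in particular the triviality of the normal bundle of $\mathcal{S}$, which is what forces $E_{1}(\varepsilon)$ to be the trivial $D^{2}$--bundle $S^{n-2}\times D^{2}$.
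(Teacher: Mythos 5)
Your identification of the model pieces $E_{0}(\varepsilon)\cong D^{n-1}\times S^{1}$ and $E_{1}(\varepsilon)\cong S^{n-2}\times D^{2}$ agrees with what the paper uses, but the transfer step is where your argument has a genuine gap: you make everything rest on an application of Perelman's Parameterized Stability Theorem to $\Psi_{d}^{\alpha}$ and, more seriously, to $g_{d}^{\alpha}$ over a neighborhood of the singular sphere $\mathcal{S}$, and you yourself defer exactly the hypotheses that would have to be verified --- that the averaged, $h_{k}$--composed distance functions fit into the class of admissible maps, and that the limit map $g=\mathrm{pr}\circ \Psi$ is regular in Perelman's sense along $\mathcal{S}$, where $\Psi$ itself is critical and $g$ is not built from distance functions (radial projection is not admissible, so one would have to replace $g$ locally by admissible maps with the same fibers and then glue the resulting local trivializations). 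None of this is routine: the paper's own remark about $f_{0,d}^{\alpha}$ in the dimension--4 section already warns that even the scalar case requires reworking the \emph{proof} of the stability theorem rather than citing its statement. As written, the central step of your argument is an unverified black box; your ``main obstacle'' paragraph concedes that the technical heart is not carried out.

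The paper's proof is arranged precisely to avoid any stability--respecting--a--map statement near $\mathcal{S}$. For $E_{0}^{\alpha}(\varepsilon)$ it uses only what the strainer arguments already give --- $\Psi_{d}^{\alpha}$ is a proper submersion on $E_{0}^{\alpha}(\varepsilon/2)$ --- so $E_{0}^{\alpha}(\varepsilon)$ is a fiber bundle over the contractible disk with one--dimensional fiber; connectedness of the total space (hence fiber $S^{1}$) comes from the plain Stability Theorem homeomorphism $h_{\alpha}$ together with the flow of $\nabla \rho^{\alpha}$, $\rho^{\alpha}=\left\vert \Psi_{d}^{\alpha}\right\vert$, which pushes $h_{\alpha}\left( E_{0}(\varepsilon/2)\right)$ onto $E_{0}^{\alpha}(\varepsilon)$. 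For $E_{1}^{\alpha}(\varepsilon)$ no fiberwise control near $\mathcal{S}$ is needed at all: choose $h_{0}:E_{0}(\varepsilon/2)\rightarrow E_{0}^{\alpha}(\varepsilon/2)$ commuting with the projections (both sides are trivial circle bundles over the disk), and use the Gluing Theorem (\cite{Kap}, Theorem 4.6) to build a global homeomorphism $h:P_{k,r}^{n}\rightarrow M^{\alpha}$ equal to $h_{0}$ on $E_{0}(\varepsilon)$ and to $h_{\alpha}$ on $E_{1}(\varepsilon/4)$; then $E_{1}^{\alpha}(\varepsilon)$ is exactly $h\left( E_{1}(\varepsilon)\right)$, the closure of the complement of $E_{0}^{\alpha}(\varepsilon)$, hence homeomorphic to $S^{n-2}\times D^{2}$. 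So to salvage your route you would have to supply the admissibility and regularity analysis along $\mathcal{S}$ yourself, and it would prove more than is needed --- the proposition only asserts the homeomorphism type of the total spaces, which is why the complementary--region trick suffices.
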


\begin{proof}
First we show that $E_{D}^{\alpha }\left( r-\varepsilon \right) $ is
connected. By the Stability Theorem \cite{Kap}, we have homeomorphisms $%
h_{\alpha }:P_{k}^{n}\left( r\right) \longrightarrow M^{\alpha }$ that are
also Gromov--Hausdorff approximations (cf. \cite{GrovPet1}, \cite{GrovPet3}
and \cite{Perel}). Thus for $\alpha $ sufficiently large, we have 
\begin{equation*}
E_{D}^{\alpha }\left( r-\varepsilon \right) \subset h_{\alpha }\left(
E_{D}\left( r-\frac{\varepsilon }{2}\right) \right) .
\end{equation*}%
Define $\rho ^{\alpha }:M^{\alpha }\longrightarrow \mathbb{R}$ by 
\begin{equation*}
\rho ^{\alpha }\left( x\right) \equiv \left\vert \Psi _{d}^{\alpha }\left(
x\right) \right\vert .
\end{equation*}%
Since $\Psi _{d}^{\alpha }|_{E_{D}^{\alpha }\left( r-\frac{\varepsilon }{2}%
\right) }$ is a submersion, it follows that $\rho ^{\alpha }$ does not have
critical points on $E_{D}^{\alpha }\left( r-\frac{\varepsilon }{2}\right)
\setminus E_{D}^{\alpha }\left( r-2\varepsilon \right) .$

There is a one-to-one correspondence between the flow lines of $\nabla \rho
^{\alpha }$ and the boundary of $E_{D}^{\alpha }\left( r-\varepsilon \right)
.$ Since each point of $h_{\alpha }\left( E_{D}\left( r-\frac{\varepsilon }{2%
}\right) \right) $ is on precisely one flow line, the flow lines of $\nabla
\rho ^{\alpha }$ give a continuous map from $h_{\alpha }\left( E_{D}\left( r-%
\frac{\varepsilon }{2}\right) \right) $ onto $E_{D}^{\alpha }\left(
r-\varepsilon \right) .$ In particular, $E_{D}^{\alpha }\left( r-\varepsilon
\right) $ is connected.

Since the domain of $\Psi _{d}^{\alpha }|_{E_{D}^{\alpha }\left(
r-\varepsilon \right) }$ is compact, $\Psi _{d}^{\alpha }|_{E_{D}^{\alpha
}\left( r-\varepsilon \right) }$ is proper. Since it is also a submersion,
it is a fiber bundle with contractible base $D^{n-1}\left( 0,r-\varepsilon
\right) .$ Since the fiber is $1$--dimensional and the total space is
connected, we conclude that $E_{D}^{\alpha }\left( r-\varepsilon \right) $
is homeomorphic to $D^{n-1}\times S^{1}.$ Since $E_{D}\left( r-\frac{%
\varepsilon }{2}\right) $ is also homeomorphic to $D^{n-1}\times S^{1},$
there is a homeomorphism $h_{0}:E_{D}\left( r-\frac{\varepsilon }{2}\right)
\longrightarrow E_{D}^{\alpha }\left( r-\frac{\varepsilon }{2}\right) $ so
that 
\begin{center}
\begin{tikzpicture}[description/.style={fill=white,inner sep=2pt}]
\matrix (m) [matrix of math nodes, row sep=3em,
column sep=2.5em, text height=1.5ex, text depth=0.25ex]
{ E_0(r-
\frac{\varepsilon}{2}) & & E_0^{\alpha}(r-\frac{\varepsilon}{2}) \\
& D^{n-1} & \\ };
\path[->,font=\scriptsize]
(m-1-1) edge node[auto] {$ h_0 $} (m-1-3)
edge node[auto,swap] {$ \Psi_d $} (m-2-2)
(m-1-3) edge node[auto] {$ \Psi_d^{\alpha} $} (m-2-2);
\end{tikzpicture}
\end{center}
commutes. Using the Strong Gluing Theorem (\cite{Kap}%
, Theorem 4.10), and the fact that $\Psi _{d}^{\alpha }$ converges to $\Psi $
as $\alpha \rightarrow \infty $ and $d\rightarrow 0,$ we choose the
homeomorphism $h_{0}:E_{D}\left( r-\frac{\varepsilon }{2}\right)
\longrightarrow E_{D}^{\alpha }\left( r-\frac{\varepsilon }{2}\right) $ so
that it is a $\tau \left( \frac{1}{\alpha }\right) $-Gromov-Hausdorff
approximation.

Applying the Gluing Theorem again, we construct a homeomorphism $%
h:P_{k}^{n}\left( r\right) \longrightarrow M^{\alpha }$ so that 
\begin{equation*}
h=\left\{ 
\begin{array}{cc}
h_{0} & \text{on }E_{D}\left( r-\varepsilon \right) \\ 
h_{\alpha } & \text{on }E_{A}\left( r-\frac{\varepsilon }{4}\right) .%
\end{array}%
\right.
\end{equation*}%
It follows that $h\left( E_{A}\left( r-\varepsilon \right) \right)
=E_{A}^{\alpha }\left( r-\varepsilon \right) .$ Since $E_{A}\left(
r-\varepsilon \right) $ is homeomorphic to $S^{n-2}\times D^{2},$ the result
follows.
\end{proof}

We are now in a position to prove the Disk and Circle Bundle Lemmas.

\begin{proof}[Proof of the Disk Bundle Lemma]
By Proposition \ref{g submersion}, $g_{d}^{\alpha }:E_{A}^{\alpha }\left(
r-\varepsilon \right) \longrightarrow $ $\partial D^{n-1}(0,r-\varepsilon
)=S^{n-2}$ is a submersion. Since the domain of $g_{d}^{\alpha }$ is
compact, $g_{d}^{\alpha }$ is proper. So $g_{d}^{\alpha }$ is a fiber bundle
with two-dimensional fiber $F.$ From the long exact homotopy sequence and
Proposition \ref{Param Stabil}, we conclude that $F$ is a $2$--disk. The
orientations of $E_{A}^{\alpha }\left( r-\varepsilon \right) \cong
S^{n-2}\times D^{2}$ and $S^{n-2}$ together induce an orientation on the
fibers of $E_{A}^{\alpha }\left( r-\varepsilon \right) .$ The oriented $2$%
--disk bundles over $S^{n-2}$ are classified by $\pi _{n-3}\left( \mathrm{%
Diff}_{+}\left( D^{2}\right) \right) ,$ where $\mathrm{Diff}_{+}\left(
D^{2}\right) $ is the group of orientation preserving diffeomorphisms of $%
D^{2}.$ By Theorem 1 of \cite{LB}, $\pi _{n-3}\left( \mathrm{Diff}_{+}\left(
D^{2}\right) \right) \cong \pi _{n-3}\left( SO\left( 2\right) \right) \cong
\left\{ 0\right\} $, unless $n=4.$ So for $n\neq 4,$ every $D^{2}$--bundle
over $S^{n-2}$ is trivial.

When $n=4$, $E_{A}\left( r-\varepsilon \right) $ is a $D^{2}$--bundle over $%
S^{2}$ whose total space is homeomorphic to $S^{2}\times D^{2}.$ The $D^{2}$%
--bundles over $S^{2}$ are precisely those whose corresponding unit circle
bundles are lens spaces. (See for example \cite{Steen}, page 135.) Since the
total space of $E_{A}\left( r-\varepsilon \right) $ is homeomorphic to $%
S^{2}\times D^{2},$ it follows that $E_{A}\left( r-\varepsilon \right) $ is
trivial in all cases, completing the proof of the Disk Bundle Lemma.
\end{proof}

\begin{proof}[Proof of the Circle Bundle Lemma]
Since $\Psi _{d}^{\alpha }|_{E_{0}^{\alpha }\left( \varepsilon \right) }$ is
a proper submersion, 
\begin{equation*}
\left( E_{D}^{\alpha }\left( r-\varepsilon \right) ,\Psi _{d}^{\alpha
}\right)
\end{equation*}
is a fiber bundle over $D^{n-1}\left( 0,r-\varepsilon \right) $ with
one-dimensional fiber $F.$ Since $E_{D}^{\alpha }\left( r-\varepsilon
\right) $ is also homeomorphic to $D^{n-1}\times S^{1},$ it follows that the
fiber is $S^{1}$. The base is contractible, so the bundle is trivial.
\end{proof}

This completes the proofs of Theorem \ref{Gromoll}, Theorem \ref{Purse Stab}%
, and the Main Theorem.

\end{document}